\documentclass{amsart}

\usepackage{thmtools}
\usepackage{thm-restate}
\usepackage{float}
\usepackage[usenames,dvipsnames]{color}
\usepackage{mathtools}
\newtheorem{theorem}{Theorem}[section]
\usepackage{amssymb}
\usepackage{hyperref}
\usepackage{graphicx} 
\usepackage[rightcaption]{sidecap}
\usepackage{wrapfig}
\usepackage{caption}
\usepackage{subcaption}
\usepackage{tikz-cd}
\usepackage{pinlabel}

\theoremstyle{definition}
\newtheorem{definition}[theorem]{Definition}

\newtheorem{lemma}[theorem]{Lemma}

\newtheorem{corollary}[theorem]{Corollary}
\newtheorem{remark}[theorem]{Remark}
\newtheorem{proposition}[theorem]{Proposition}

\graphicspath{ {./Figures/} }

\begin{document}

\title{$n$-knots in $S^n\times S^2$ and contractible $(n+3)$-manifolds}

\author{Geunyoung Kim}
\address{Department of Mathematics, University of Georgia, Athens, GA 30602}
\curraddr{}
\email{g.kim@uga.edu}
\thanks{}

\begin{abstract}
In $1961$, Mazur \cite{mazur1961note} constructed a contractible, compact, smooth $4$-manifold with boundary which is not homeomorphic to the standard $4$-ball, using a $0$-handle, a $1$-handle and a $2$-handle.  In this paper, for any integer $n\geq2,$ we construct a contractible, compact, smooth $(n+3)$-manifold with boundary which is not homeomorphic to the standard $(n+3)$-ball, using a $0$-handle, an $n$-handle and an $(n+1)$-handle. The key step is the construction of an interesting knotted $n$-sphere in $S^n\times S^2$ generalizing the Mazur pattern.
\end{abstract}

\subjclass{}
\keywords{}
\date{}
\dedicatory{}

\maketitle

\section{Introduction}
In this paper, we prove the following two main theorems.

\begin{restatable}{theorem}{mainone}\label{thm:main1}
For any integer $n\geq2,$ there exists a contractible, compact, smooth $(n+3)$-manifold with boundary admitting a handle decomposition with a $0$-handle, an $n$-handle and an $(n+1)$-handle which is not homeomorphic to the standard $(n+3)$-ball $B^{n+3}$.
\end{restatable}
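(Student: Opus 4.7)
The plan is to adapt Mazur's $n = 1$ construction to arbitrary $n \ge 2$. The essential new ingredient will be a smoothly embedded $n$-sphere $K \subset S^n \times S^2$ that is homologous to the core $S^n \times \{\mathrm{pt}\}$ yet whose complement has nontrivial fundamental group. Granted such a $K$, I let $W$ be built from $B^{n+3}$ by first attaching an $n$-handle trivially --- producing $S^n \times B^3$ with boundary $S^n \times S^2$ --- and then attaching an $(n+1)$-handle along $K$. No framing ambiguity arises: the normal bundle of $K$ is trivial because $\pi_{n-1}(SO(2)) = 0$ for $n \ge 2$, and $\pi_n(SO(2)) = 0$ makes the framing unique.

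To see that $W$ is contractible, note that the handle decomposition gives $W$ a CW structure with a single cell in each of the dimensions $0$, $n$, $n+1$ and no $1$-cells, so $W$ is simply connected. The only possibly nonzero cellular boundary is $C_{n+1} \to C_n$, whose degree equals the image of $[K]$ in $H_n(S^n \times B^3) \cong \mathbb{Z}$, and this is $\pm 1$ by the homological assumption on $K$. Thus $\widetilde{H}_\ast(W) = 0$ and Whitehead's theorem yields contractibility.

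To distinguish $W$ from $B^{n+3}$, I will show $\partial W \not\cong S^{n+2}$. The boundary is given by the surgery
\[
\partial W \;=\; \bigl((S^n \times S^2) \setminus \mathrm{int}(\nu K)\bigr) \cup_{S^n \times S^1} \bigl(D^{n+1} \times S^1\bigr),
\]
and van Kampen applied to this decomposition, using that the inclusion $\pi_1(S^n \times S^1) \to \pi_1(D^{n+1} \times S^1)$ is an isomorphism $\mathbb{Z} \xrightarrow{\sim} \mathbb{Z}$ on meridians (the longitude is an $n$-sphere, so contributes nothing to $\pi_1$ for $n \ge 2$), collapses the amalgamated product to give
\[
\pi_1(\partial W) \;\cong\; \pi_1\bigl((S^n \times S^2) \setminus K\bigr).
\]
If the knot group on the right is nontrivial, then $\partial W$ is a homology $(n+2)$-sphere with nontrivial fundamental group, hence is not $S^{n+2}$, and so $W \not\cong B^{n+3}$.

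The main obstacle is therefore the construction itself: producing the knotted $n$-sphere $K$ and verifying that its complement has nontrivial $\pi_1$. My approach would be to spin or suspend the classical Mazur pattern in $S^1 \times B^2$ to obtain an $n$-sphere sitting in $S^n \times B^2 \subset S^n \times S^2$ that remains homologous to the core, and then to verify nontriviality of $\pi_1$ of the complement via a Wirtinger-type presentation or by exhibiting a nontrivial infinite cyclic cover. The delicate point is that many invariants tend to vanish for high-dimensional knots, so the pattern must retain enough of the clasped nature of the Mazur arc to force detectable fundamental group of the complement.
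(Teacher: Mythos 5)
Your surrounding framework is sound and closely parallels the paper's: build $W$ from $S^n\times B^3$ by attaching an $(n+1)$-handle along a knotted sphere $K\subset S^n\times S^2$, note the framing is unique since $\pi_n(SO(2))=0$, get contractibility (your route via the cellular chain complex and Whitehead is fine, and is even a little more direct than the paper's, which instead shows $W\times B^1\cong B^{n+4}$), and distinguish $W$ from $B^{n+3}$ by showing $\pi_1(\partial W)\cong\pi_1\bigl((S^n\times S^2)\setminus K\bigr)$ via the surgery description of $\partial W$ --- this is exactly the paper's Propositions on $\pi_1(\partial X_{K^n})$ and the final corollary. The genuine gap is that the entire content of the theorem lives in the step you defer: constructing $K$ and proving its complement has nontrivial $\pi_1$. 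Your proposal of ``spinning or suspending the Mazur pattern'' runs directly into the obstruction that the paper is organized around: in $S^n\times S^2$ with $n\ge 2$, the cores of the $(n+1)$-dimensional $1$-handles used to band together parallel copies of $S^n\times\{y_0\}$ are arcs of codimension $n+1\ge 3$, so homotopy of cores implies isotopy and there is a unique framing; consequently any ``suspended'' version of the Mazur band picture with homotopically trivial cores produces a sphere isotopic to $S^n\times\{y_0\}$ (the paper proves, for instance, that two parallel copies joined by any single $1$-handle always yield the unknot). The paper's resolution is to take three parallel copies and two $1$-handles whose cores run over the $2$-handle of the standard handle decomposition of $S^n\times S^2$ in a specific nontrivial way, and then to compute $\pi_1$ of the complement from an explicit handle decomposition, obtaining a presentation that surjects onto $A_5$.

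A second concrete problem with your fallback verification methods: the complement of any such $K$ (homologous to the core, algebraic intersection $1$ with $\{x_0\}\times S^2$) has $H_1=0$, just like the complement $B^n\times S^2$ of the standard core, so there is no surjection $\pi_1\to\mathbb{Z}$ and hence no nontrivial infinite cyclic cover to exploit; Alexander-module style invariants are unavailable here. A Wirtinger-type computation can work, but only once you have an explicit $K$ in hand --- which is precisely the construction your plan leaves open. So as written the proposal reduces the theorem to its hardest ingredient without supplying it, and the specific construction you sketch would, without further modification, produce an unknotted sphere.
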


In order to prove Theorem \ref{thm:main1}, given $n\geq2$, we first construct an interesting $n$-knot $K^n$ in $S^n\times S^2$ (Definition \ref{dfn:theNknt}) which is homotopic but not isotopic to $S^n\times\{y_0\}$, where $y_0\in S^2$ (Proposition \ref{pro:hmtpctoSnXpt} and Corollary \ref{cor:NIstpctoSnXpt}). Secondly we construct a contractible, compact, smooth $(n+3)$-manifold $X_{K^n}$ (Definition \ref{def:MainMfld}) from $S^n\times B^3$ ($0$-handle $\cup$ $n$-handle) by attaching a single $(n+3)$-dimensional $(n+1)$-handle along the $n$-knot $K^n$ in $S^n\times S^2=\partial(S^n\times B^3)$. Finally we prove that $X_{K^n}$ is contractible by showing that $X_{K^n}\times B^1$ is diffeomorphic to $B^{n+4}$ (Proposition \ref{pro:XprdctIntvldiffeotoball}) and prove that $X_{K^n}$ is not homeomorphic to $B^{n+3}$ by showing that $\partial X_{K^n}$ is a non-simply connected homology $(n+2)$-sphere (Corollary \ref{cor:nonsmplycnctdhlgysphre}).

\begin{restatable}{theorem}{maintwo}\label{thm:main2}
For any integer $n\geq2,$ there exists an involution of $S^{n+3}$ whose fixed point set is a non-simply connected homology $(n+2)$-sphere.
\end{restatable}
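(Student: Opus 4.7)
The plan is to apply the standard doubling construction to the contractible manifold $X_{K^n}$ produced by Theorem \ref{thm:main1}. First I would consider $X_{K^n}\times B^1$ (taking $B^1=[-1,1]$) together with the smooth involution $\sigma(x,t)=(x,-t)$, whose fixed point set is the slice $X_{K^n}\times\{0\}$. Restricting $\sigma$ to the boundary $\partial(X_{K^n}\times B^1)$ gives a smooth involution whose fixed set is exactly $\partial X_{K^n}\times\{0\}\cong \partial X_{K^n}$.

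Next I would identify the ambient space with a standard sphere. By Proposition \ref{pro:XprdctIntvldiffeotoball}, there is a diffeomorphism $X_{K^n}\times B^1\cong B^{n+4}$; taking boundaries yields $\partial(X_{K^n}\times B^1)\cong S^{n+3}$. Transporting $\sigma|_{\partial}$ across this diffeomorphism produces a smooth involution $\tau$ of $S^{n+3}$ whose fixed point set is diffeomorphic to $\partial X_{K^n}$. Finally, Corollary \ref{cor:nonsmplycnctdhlgysphre} identifies $\partial X_{K^n}$ as a non-simply connected homology $(n+2)$-sphere, completing the argument.

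There is essentially no substantive obstacle at this stage, since all the hard work is packaged into Theorem \ref{thm:main1} and its two consequences; the involution comes for free from the reflection in the $B^1$ factor, which is exactly the reason for building $X_{K^n}$ as a contractible manifold that smooths out after one stabilization. The only point worth a brief sanity check is that $\tau$ is a genuine smooth involution of $S^{n+3}$ (not merely a topological one), but this is automatic because $\sigma$ is smooth on $X_{K^n}\times B^1$ and we are only conjugating by a diffeomorphism.
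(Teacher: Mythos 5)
Your argument is correct and is essentially the paper's proof: the paper forms the double $DX_{K^n}=X_{K^n}\cup_{id}\overline{X_{K^n}}\cong\partial(X_{K^n}\times B^1)\cong\partial B^{n+4}=S^{n+3}$ (Lemma \ref{lem:doubleissphere}, via Proposition \ref{pro:XprdctIntvldiffeotoball}) and takes the involution switching the two copies and fixing $\partial X_{K^n}$, which is exactly your reflection $(x,t)\mapsto(x,-t)$ restricted to $\partial(X_{K^n}\times B^1)$. Your phrasing via the explicit reflection even makes the smoothness of the involution slightly more transparent, but the route is the same.
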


In order to prove Theorem \ref{thm:main2}, given $n\geq2$, we first show that the double $DX_{K^n}=X_{K^n}\cup_{id}\overline{X_{K^n}}$ of $X_{K^n}$ is diffeomorphic to $S^{n+3},$ where $id:\partial X_{K^n}\rightarrow \partial X_{K^n}$ is an identity map (Lemma \ref{lem:doubleissphere}). We then define an involution $\phi: S^{n+3}\rightarrow S^{n+3}$ switching copies of $X_{K^n}$ and fixing the non-simply connected homology $(n+2)$-sphere $\partial X_{K^n}.$

\begin{remark} Here we discuss the relationship between our results and earlier results.
    \begin{enumerate}
    \item In \cite{mazur1961note} Mazur proved Theorem \ref{thm:main1} and Theorem \ref{thm:main2} when $n=1$. We can consider Mazur's $1$-knot in $S^1\times S^2$ as the result of surgery of three parallel copies of $S^1\times\{y_0\}\subset S^1\times S^2$ along two $2$-dimensional $1$-handles whose cores are trivial and with some twistings (See $J_1$ for Mazur's $1$-knot and $J_2$ for another interesting $1$-knot in Remark \ref{rmk:intrstng1knts}). However, we cannot generalize Mazur's $1$-knot to an $n$-knot in $S^n\times S^2$ obtained from three parallel copies of $S^n\times\{y_0\}\subset S^n\times S^2$ by surgery along two $(n+1)$-dimensional $1$-handles whose cores are trivial when $n\geq2$ because the resulting $n$-knot is always isotopic to $S^n\times \{y_0\}\subset S^n\times S^2.$ In Definition \ref{dfn:theNknt}, we resolve this issue and find interesting $(n+1)$-dimensional $1$-handles whose cores are non-trivial but very simple so that we construct the $n$-knot $K^n$ in $S^n\times S^2$.
    \item In \cite{sato19912} Sato proved Theorem \ref{thm:main1} and Theorem \ref{thm:main2} when $n=2.$ Sato constructed a $2$-knot $F$ in $S^2\times S^2$ which is homotopic but not isotopic to $S^2\times \{y_0\}$ by surgery along a simple closed curve in the complement of the $5$-twist spun trefoil in $S^4$ in \cite{sato1991locally}. However, Sato's construction is not very explicit so it is difficult to visualize the $2$-knot $F$. In particular, we do not know the geometric intersection number $|F\cap \{x_0\}\times S^2|$ directly and it is hard to see why $F\times\{0\}\subset S^2\times S^2\times B^1$ is isotopic to $S^2\times\{y_0\}\times\{0\}\subset S^2\times S^2\times B^1$. Our construction of $K^n$ resolves these issues.
    \item In \cite{kervaire1969smooth}, 
    Kervaire proved the existence of contractible, compact, smooth $(n+3)$-manifolds which are not homeomorphic to $B^{n+3}$, where $n\geq2.$ However, the proof does not tell us much about the handle decomposition or give us a proof of Theorem \ref{thm:main2}.
    \end{enumerate}
\end{remark} 

\begin{remark} Here we note some nice properties of our $n$-knot $K^n$ in $S^n\times S^2$, and highlight some ways in which our construction is an improvement on the techniques used in the results described above.
    \begin{enumerate}
    \item The construction of $K^n$ is very explicit and visualized for every $n\geq2$ (Definition \ref{dfn:theNknt}). We may construct infinitely many interesting $n$-knots in $S^n\times S^2$ by modifying the cores of $(n+1)$-dimensional $1$-handles. For example, the cores used to construct $K^n$ come from the case when $m_1=1, m_2=-1,$ and $m_3=\cdots =m_{2i}=0$ in the left-hand side of Figure \ref{A3}. We may then construct infinitely many contractible, compact, smooth $(n+3)$-manifolds which are not homeomorphic to $B^{n+3}$.
    \item $K^n$ is the simplest example of this construction in the sense that the geometric intersection number $|K^n\cap (\{x_0\}\times S^2)|=3$ and the algebraic intersection number $K^n\cdot (\{x_0\}\times S^2)=1,$ where $x_0\in S^n$ (Proposition \ref{pro:hmtpctoSnXpt}). Furthermore it is impossible to have $|F\cap \{x_0\}\times S^2|<3$ for any $n$-knot $F$ isotopic to $K^n$ (Corollary \ref{cor:mniIntrsctnNmbrs}).
    \item A homotopy between $K^n\subset S^n\times S^2$ and $S^n\times \{y_0\}\subset S^n\times S^2$ is visualized (Proposition \ref{pro:hmtpctoSnXpt}).
    \item An isotopy between $K^n\times\{0\}\subset S^n\times S^2\times B^1$ and $S^n\times\{y_0\}\times\{0\}\subset S^n\times S^2\times B^1$ is visualized (Proposition \ref{pro:istpctoSnXptXpt}). This isotopy is essential to proving that $X_{K^n}\times B^1$ is diffeomorphic to $B^{n+4}$ (Proposition \ref{pro:XprdctIntvldiffeotoball}).
    \item The construction of $K^n$ gives an explicit handle decomposition of $S^n\times S^2\setminus int(\nu(K^n))$ (Remark \ref{rem:HDofCmplmnt}) so we can easily find the fundamental group  $\pi_1(S^n\times S^2\setminus int(\nu(K^n)))$ (Proposition \ref{pro:FndmntlGpofCmplmnt}).
    \item The construction of $K^n$ gives an explicit handle decomposition of the non-simply connected homology $(n+2)$-sphere $\partial X_{K^n}$ (Remark \ref{rem:HDofbndry}) so we can easily show that $\pi_1(X_{K^n})\cong \pi_1(S^n\times S^2\setminus int(\nu(K^n)))$ (Proposition \ref{pro:FndmntlGrSame}).
    \end{enumerate}
\end{remark}

\subsection*{Organization}
In section \ref{s1} we set up some standard notation and interpret Mazur's $1$-knot in $S^1\times S^2$ from the point of view of surgery as motivation for our construction of the $n$-knot $K^n$ in $S^n\times S^2.$ In section \ref{s2} we construct our $n$-knot $K^n$ in $S^n\times S^2$ and the contractible $(n+3)$-manifold $X_{K^n}$, and then prove Theorem \ref{thm:main1} and Theorem \ref{thm:main2}.

\subsection*{Conventions}
In this paper, we work in the smooth category. $\mathbb{R}^n, S^n, B^n$ and $\{0\}\subset B^n\subset \mathbb{R}^n$ stand for the real $n$-space, the standard $n$-sphere, the standard $n$-ball and the origin of $B^n$ or $\mathbb{R}^n$.

\subsection*{Acknowledgements} This work was supported in part by National Science Foundation grant DMS-2005554 ``Smooth $4$--Manifolds: $2$--, $3$--, $5$-- and $6$--Dimensional Perspectives''.

\begin{figure}[ht]
    \centering
    \includegraphics[width=0.5\textwidth]{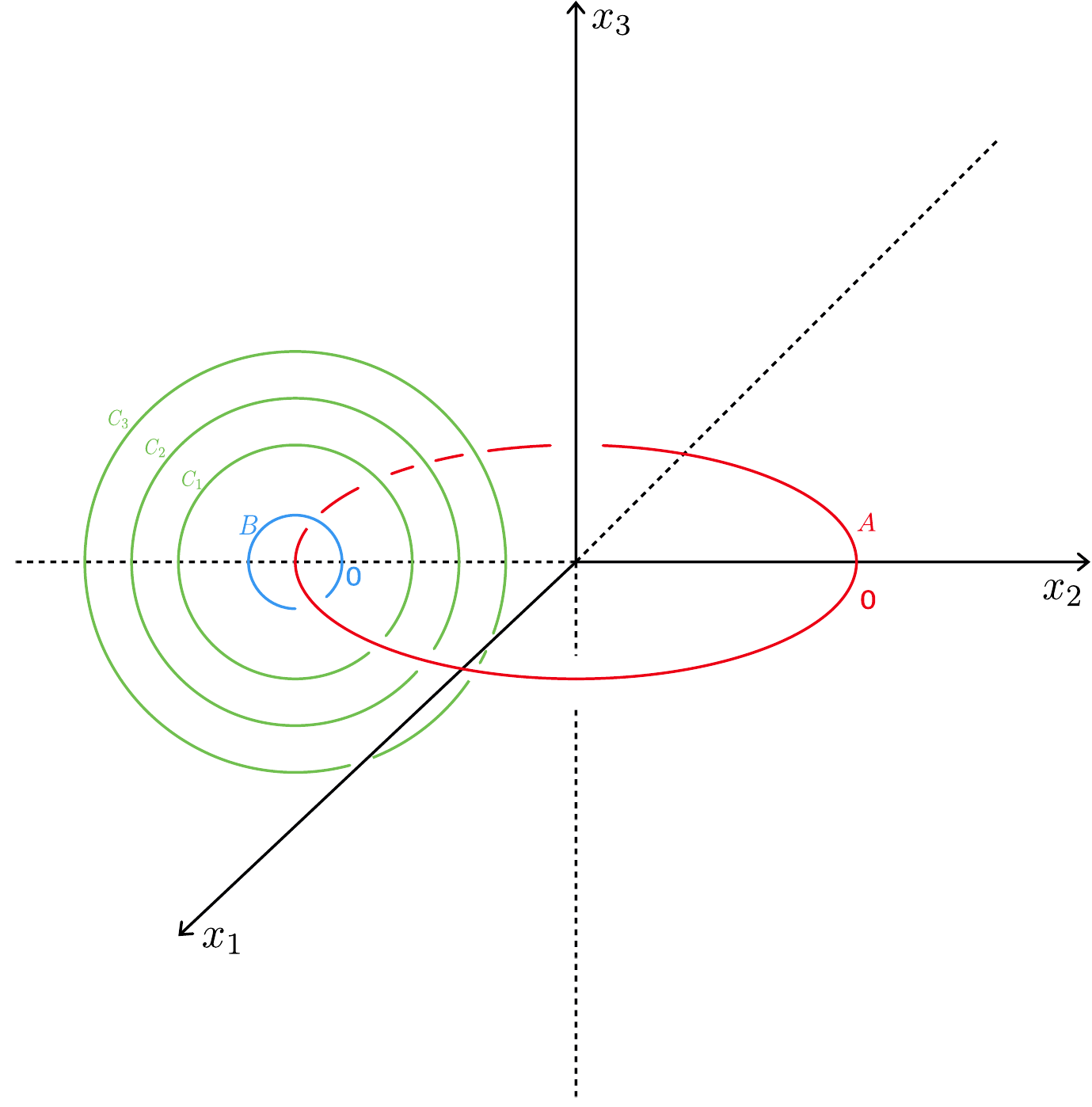}
    \caption{Standard handle decomposition of $S^n\times S^2$ and three parallel copies of $S^n\times\{y_0\}$. $A$ is the attaching sphere of the $2$-handle with $0$-framing, $B$ is the attaching sphere of $n$-handle with $0$-framing, and $C_t$ is the equator of $S^n\times\{y_t\}$.}
    \label{A1}
\end{figure}

\section{Preliminaries}\label{s1}
We begin by explicitly describing the standard handle decomposition of $S^n\times S^2$ and the associated attaching maps.

\begin{remark}[Standard handle decomposition of $S^n\times S^2$]\label{rem:StndrdHdlDcmpstnSnXS2}
Decompose $S^n=B^n_{-}\cup B^n_{+}$ into two $n$-dimensional balls and $S^2=B^2_{-}\cup B^2_{+}$ into two $2$-dimensional balls. Then $S^n\times S^2=(B^n_{-}\cup B^n_{+})\times (B^2_{-}\cup B^2_{+})=(B^n_{-}\times B^2_{-})\cup(B^n_{-}\times B^2_{+})\cup(B^n_{+}\times B^2_{-})\cup(B^n_{+}\times B^2_{+})$ has a handle decomposition with a single $0$-handle, a single $2$-handle, a single $n$-handle, and a single $(n+2)$-handle. We can easily see the attaching sphere of the $2$-handle (trivial $1$-knot) and the attaching sphere of the $n$-handle (trivial $(n-1)$-knot) on the boundary of the $0$-handle i.e., $(\{0\}\times S^1)\cup (S^{n-1}\times \{0\})\subset \partial (B^{n}_{-}\times B^2_{-})\cong \partial B^{n+2}=S^{n+1}.$ For future reference, we parameterize the trivial $1$-knot and the trivial $(n-1)$-knot in $\mathbb{R}^{n+1}\subset \mathbb{R}^{n+1}\cup \{\infty\}\cong S^{n+1}$. The trivial $1$-knot $\{0\}\times S^1$ corresponds to $A:=\{(x_1,\dots,x_{n+1})\in \mathbb{R}^{n+1} \mid {x_1}^2+{x_2}^2=1, x_i=0$ for $i>2\}$ and the trivial $(n-1)$-knot $S^{n-1}\times\{0\}$ corresponds to $B:=\{(x_1,\dots,x_{n+1})\in \mathbb{R}^{n+1} \mid x_1=0,{(x_2+1)}^2+\sum_{i=3}^{n+1} {x_i}^2 = {(\frac{1}{5}})^2\}.$ An $(\mathbb{R}^3\times\{0\})$-slice $(A\cup B)\cap (\mathbb{R}^3\times\{0\})$ of $A\cup B$ is the Hopf link in Figure \ref{A1}, where $\mathbb{R}^3\times\{0\}\subset \mathbb{R}^3\times \mathbb{R}^{n-2}\cong \mathbb{R}^{n+1}.$ Again, $S^n\times S^2$ can be recovered from $B^{n+2}$ by attaching a single $2$-handle to $S^{n+1}\cong \mathbb{R}^{n+1}\cup \{\infty\}$ along $A$ with $0$-framing (product framing), a single $n$-handle with $0$-framing (product framing), and a single $(n+2)$-handle (we don't draw the $(n+2)$-handle). The $0$ in Figure \ref{A1} is shorthand for the obvious product framing. For example, Figure \ref{A1} is a Kirby diagram of $S^2\times S^2$ when $n=2.$ A parallel copy $C_t:=\{(x_1,\dots,x_{n+1})\in \mathbb{R}^{n+1} \mid x_1=0,{(x_2+1)}^2+\sum_{i=3}^{n+1} {x_i}^2 = {(\frac{t}{4}})^2\}$ of $B$ bounds a properly embedded trivial $n$-ball $D^{-}_{t}$ in $B^{n+2}$ ($C_t$ bounds a trivial $n$-ball in $S^{n+1}$ and push the interior of the $n$-ball into the interior of $B^{n+2}$) and a copy of the core of the $n$-handle $D^{+}_{t}$ so $C_{t}=D^{-}_{t}\cap D^{+}_{t}$ represents the equator of $S^n\times \{y_t\}$ and $D_{t}:=D^{-}_{t}\cup D^{+}_{t}$ represents $S^n\times \{y_t\}.$  From now on, we use red, blue and green for $A,B$, and $C_t,$ respectively.
\end{remark}

Next we establish terminology for handles embedded in an ambient manifold and attached to a submanifold.

\begin{definition}
Let $n\geq1$. Let $N^n\subset M^{n+2}$ be a $n$-dimensional submanifold of $(n+2)$-dimensional manifold $M$. An $(n+1)$-dimensional submanifold $h\subset M$ is called a \textit{$1$-handle attached to $N$} if there exists an embedding $e:B^1\times B^n\hookrightarrow M$ such that $h=e(B^1\times B^n)$ and $h\cap N=e(\partial B^1\times B^n).$ We call $C_h:=e(B^1\times\{0\})$ the \textit{core} of $h$ and $N_h:=(N\setminus int(e(\partial B^1 \times B^{n})))\cup e(B^1\times \partial B^{n})\subset M$ \textit{the result of surgery} of $N$ along $h$ in $M.$ Here, $N_h$ is assumed to be oriented so that the orientation of $N\setminus int(e(\partial B^1\times B^n))$ extends to the orientation of $N_h$.
\end{definition}

Boyle proved the following when $n=2$ and $M=S^4.$ See \cite{boyle1988classifying} for more details. 

\begin{proposition}\label{pro:Istpy1HndlsWSCrs}
Let $n\geq2.$ Let $h$ and $h'$ be $1$-handles attached to $N^n\subset M^{n+2}$ with the same core $C.$ Then there exists an ambient isotopy of $M$ taking $h$ to $h'$ fixing $N$ setwise. Furthermore, the results of surgery $N_h$ and $N_{h'}$ are isotopic.
    \begin{proof}
    Following \cite{boyle1988classifying}, the difference between two such $1$-handles with the same core gives a map $\theta:B^1\rightarrow G_{n+1,n};$ the Grassmannian manifold of oriented $n$-planes in $\mathbb{R}^{n+1},$ with $\theta(-1)=\theta(1).$ Since $n\geq2$ and $G_{n+1,n}\cong S^n$, we have $\pi_1(G_{n+1,n})=0,$ so there exists an isotopy of $M$ taking $h$ to $h'$  fixing $N$ setwise. From this we see that the results of surgery $N_h$ and $N_{h'}$ are isotopic.
    \end{proof}
\end{proposition}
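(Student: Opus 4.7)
The plan is to follow Boyle's framed-cobordism-style argument: reduce the comparison of two $1$-handles with common core to a comparison of framings along that core, use the simple connectivity of the relevant Grassmannian to produce a homotopy of framings, then upgrade this homotopy to an ambient isotopy of $M$.

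First I would observe that a $1$-handle $h = e(B^1 \times B^n)$ attached to $N \subset M$ with prescribed core $C$ is equivalent to the data of a smooth field of oriented $n$-planes along $C$ inside the normal bundle $\nu(C, M)$: at each core point $e(t,0)$ the plane is $de_{(t,0)}(\{0\} \times \mathbb{R}^n)$. Since $C \cong B^1$ is contractible, $\nu(C, M)$ is a trivial rank-$(n+1)$ bundle, and fixing a trivialization turns this field into a smooth map $\theta_h : B^1 \to G_{n+1, n}$. The attaching condition $h \cap N = e(\partial B^1 \times B^n)$ forces $\theta_h(\pm 1)$ to equal the tangent plane to $N$ at the corresponding endpoint of $C$, so the boundary values depend only on $N$ and $C$, not on the particular handle.

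For two handles $h, h'$ with the same core, $\theta_h$ and $\theta_{h'}$ therefore agree on $\partial B^1$, so their concatenation (after reversing one) is a loop in $G_{n+1, n} \cong S^n$. Since $n \geq 2$ gives $\pi_1(S^n) = 0$, this loop is null-homotopic, so $\theta_h$ and $\theta_{h'}$ are homotopic rel $\partial B^1$. Such a homotopy $\theta_s$ yields a smooth $1$-parameter family of $1$-handles $h_s$ with common core $C$. After a preliminary ambient isotopy of $M$ fixing $C$ that makes the two attaching maps near $\partial C$ coincide, the remainder of the framing homotopy can be realized by an ambient isotopy supported in a small tubular neighborhood of the interior of $C$ in $M$, which is disjoint from $N$. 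This isotopy takes $h$ to $h'$ while fixing $N$ setwise, and in particular carries $N_h$ to $N_{h'}$, which proves the second assertion.

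The main technical step I anticipate having to think carefully about is the last one: promoting a Grassmannian-level homotopy into a genuine ambient isotopy of $M$ preserving $N$. The key point is that the interior of $C$ is disjoint from $N$, so the deformation can be localized to a tubular neighborhood of $\mathrm{int}(C)$ via isotopy extension without disturbing $N$; any discrepancy between the attaching regions of $h$ and $h'$ near $\partial C$ is absorbed in the preliminary normalization. The rest of the argument is homotopy-theoretic and essentially the same computation Boyle carried out for $n = 2$ and $M = S^4$.
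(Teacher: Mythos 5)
Your proposal is correct and follows essentially the same route as the paper: encoding the difference between the two handles as a map to the Grassmannian $G_{n+1,n}\cong S^n$, using $\pi_1(S^n)=0$ for $n\geq 2$, and upgrading the resulting framing homotopy to an ambient isotopy fixing $N$ setwise. Your write-up simply makes explicit the tubular-neighborhood and isotopy-extension steps that the paper leaves to the citation of Boyle.
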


\begin{corollary}\label{cor:Istpy1HndlsWICrs}
Let $n\geq2.$ Let $h$ and $h'$ be $1$-handles attached to $N^n\subset M^{n+2}$ with cores $C_h$ and $C_{h'},$ respectively. If $C_h$ and $C_{h'}$ are isotopic through arcs such that the boundary of each arc is in $N$ and the interior of each arc doesn't intersect with $N,$ then there exists an ambient isotopy of $M$ taking $h$ to $h'$ fixing $N$ setwise. In particular, the results of surgery $N_h$ and $N_{h'}$ are isotopic.
    \begin{proof}
    By the tubular neighborhood theorem and Proposition \ref{pro:Istpy1HndlsWSCrs}, the isotopy taking $C_h$ to $C_{h'}$ through arcs such that the boundary of each arc is in $N$ and the interior of each arc doesn't intersect with $N$ can extend to an ambient isotopy of $M$ taking $h$ to $h'$ fixing $N$ setwise. From this we see that the results of surgery $N_h$ and $N_{h'}$ are isotopic.
    \end{proof}
\end{corollary}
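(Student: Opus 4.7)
The plan is to reduce the corollary to the already-established Proposition \ref{pro:Istpy1HndlsWSCrs} by first moving the core of $h$ onto the core of $h'$ via an ambient isotopy of $M$ that fixes $N$ setwise, and then invoking Proposition \ref{pro:Istpy1HndlsWSCrs} to move the resulting handle (which shares a core with $h'$) onto $h'$ itself.

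First, I would upgrade the hypothesized arc isotopy $C_t$ ($t\in[0,1]$) from $C_h$ to $C_{h'}$ to an ambient isotopy $\Phi_t$ of $M$ that fixes $N$ setwise. The arc isotopy is by hypothesis proper with respect to the pair $(M,N)$: each $C_t$ meets $N$ precisely in $\partial C_t$, transversally. Applying the relative version of the isotopy extension theorem to the pair $(M,N)$ produces a compactly supported ambient isotopy $\Phi_t:M\to M$ with $\Phi_0=\mathrm{id}$, $\Phi_t(N)=N$, and $\Phi_1(C_h)=C_{h'}$. Push the handle $h$ along this isotopy, setting $h'':=\Phi_1(h)$; because $\Phi_t$ preserves $N$ setwise and $h$ meets $N$ exactly along the attaching region determined by its core, $h''$ is a $1$-handle attached to $N$, and by construction its core is $C_{h''}=C_{h'}$.

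Next I would apply Proposition \ref{pro:Istpy1HndlsWSCrs} to the pair of $1$-handles $h''$ and $h'$, which share the common core $C_{h'}$. This yields an ambient isotopy $\Psi_t$ of $M$ fixing $N$ setwise, with $\Psi_0=\mathrm{id}$ and $\Psi_1(h'')=h'$. Composing, the ambient isotopy $t\mapsto \Psi_t\circ \Phi_t$ (or more carefully, the concatenation of $\Phi_t$ and $\Psi_t$ reparameterized on $[0,1]$) takes $h$ to $h'$ while fixing $N$ setwise throughout. Finally, since this ambient isotopy fixes $N$ setwise and carries $h$ to $h'$, it carries the surgered submanifold $N_h=(N\setminus \mathrm{int}(h\cap N))\cup (\partial h\setminus N)$ to $N_{h'}$, giving the desired isotopy of $N_h$ and $N_{h'}$.

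The only real content is the extension step: verifying that the arc isotopy, which is tangent to $N$ only along its boundary, extends ambiently in a way that preserves $N$ setwise. The hypothesis that the interior of each arc is disjoint from $N$ is exactly what is needed to apply the relative isotopy extension theorem to the properly embedded arcs in $(M, N)$. I expect this to be the main (though still routine) technical point; once it is in hand, the rest of the argument is formal and Proposition \ref{pro:Istpy1HndlsWSCrs} closes out the proof.
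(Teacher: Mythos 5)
Your proposal is correct and follows essentially the same route as the paper: extend the core isotopy to an ambient isotopy of $M$ preserving $N$ setwise (the paper cites the tubular neighborhood/isotopy extension machinery for this step), then invoke Proposition \ref{pro:Istpy1HndlsWSCrs} to handle the two $1$-handles sharing the core $C_{h'}$, and conclude that $N_h$ and $N_{h'}$ are isotopic. Your write-up simply makes explicit the two-step factorization that the paper's one-line proof compresses.
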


\begin{remark}\hfill
    \begin{enumerate}
    \item Corollary \ref{cor:Istpy1HndlsWICrs} is not true when $n=1$ because $\pi_1(G_{n+1,n})\cong\mathbb{Z}.$ Therefore different framings of a core may give non-isotopic $1$-handles; see Remark \ref{rmk:intrstng1knts}.(4).
    \item When $n\geq2$, a homotopy between arcs implies an isotopy between arcs and there is a unique framing of a core, so $(n+1)$-dimensional $1$-handles are less complicated than $2$-dimensional $1$-handles.
    \end{enumerate}
\end{remark}

\begin{figure}[ht]
    \centering
    \includegraphics[width=0.7\textwidth]{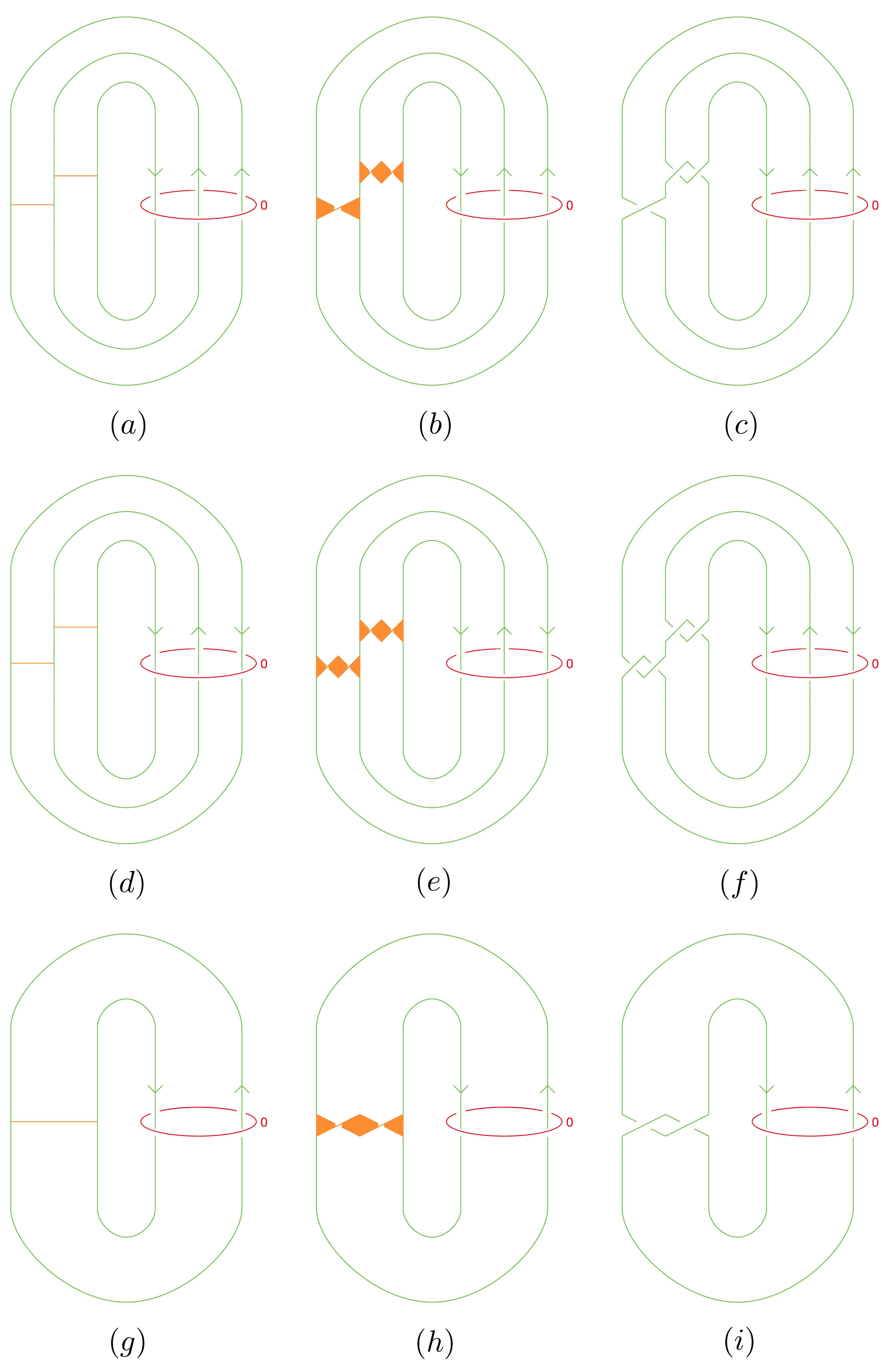}
    \caption{First column $(a),(d),(g)$: parallel copies of $S^1\times \{y_0\}$ in $S^1\times S^2$ with trivial cores of $2$-dimensional $1$-handles. Second column $(b),(e),(h)$: $2$-dimensional $1$-handles determined by the cores and framings (or twistings). Third column $(c),(f),(i)$: results of surgery $J_1,J_2,$ and $J_3$. First row: a process of obtaining $J_1$ by surgery. Second row: a process of obtaining $J_2$ by surgery. Third row: a process of obtaining $J_3$ by surgery.}
    \label{A2}
\end{figure}

We now analyze Mazur's knot $J_1$ and some other interesting knots $J_2$ and $J_3$ in $S^1\times S^2$ from the point of view of surgery along $1$-handles. Figure \ref{A2} illustrates these examples. In this figure we consider $S^1\times S^2$ as the boundary of $S^2\times B^2,$ where $S^2\times B^2$ is obtained from $B^4$ by attaching a $2$-handle along the unknot with the $0$-framing (product framing). Observe the following features of the knots constructed in Figure \ref{A2}:

\begin{remark}\label{rmk:intrstng1knts}\hfill
    \begin{enumerate}
    \item $J_1$ is homotopic but not isotopic to $S^1\times \{y_0\}$ in $S^1\times S^2$; see Figure \ref{A2}.($c$).
    \item $J_2$ is homotopic but not isotopic to $S^1\times \{y_0\}$ in $S^1\times S^2$; see Figure \ref{A2}.($f$).
    \item $J_3$ is homotopic but not isotopic to the unknot in $S^1\times S^2$; see Figure \ref{A2}.($i$).
    \item $J_1, J_2$ and $J_3$ are obtained from parallel copies of $S^1\times \{y_0\}$ by surgery along $2$-dimensional $1$-handles in the figure. Here, we can see that handles are attached so that the results of surgery are oriented and depend on the cores and framings (or twistings) of the cores. (See the second column in Figure \ref{A2}, and note that we may obtain different knots by twisting the bands more.)
    \item In the next section we will construct an $n$-knot in $S^n\times S^2$ from three parallel copies of $S^n\times\{y_0\}$ by surgery along two interesting $(n+1)$-dimensional $1$-handles.
    \end{enumerate}
\end{remark}

\begin{figure}[ht]
    \centering
    \includegraphics[width=0.85\textwidth]{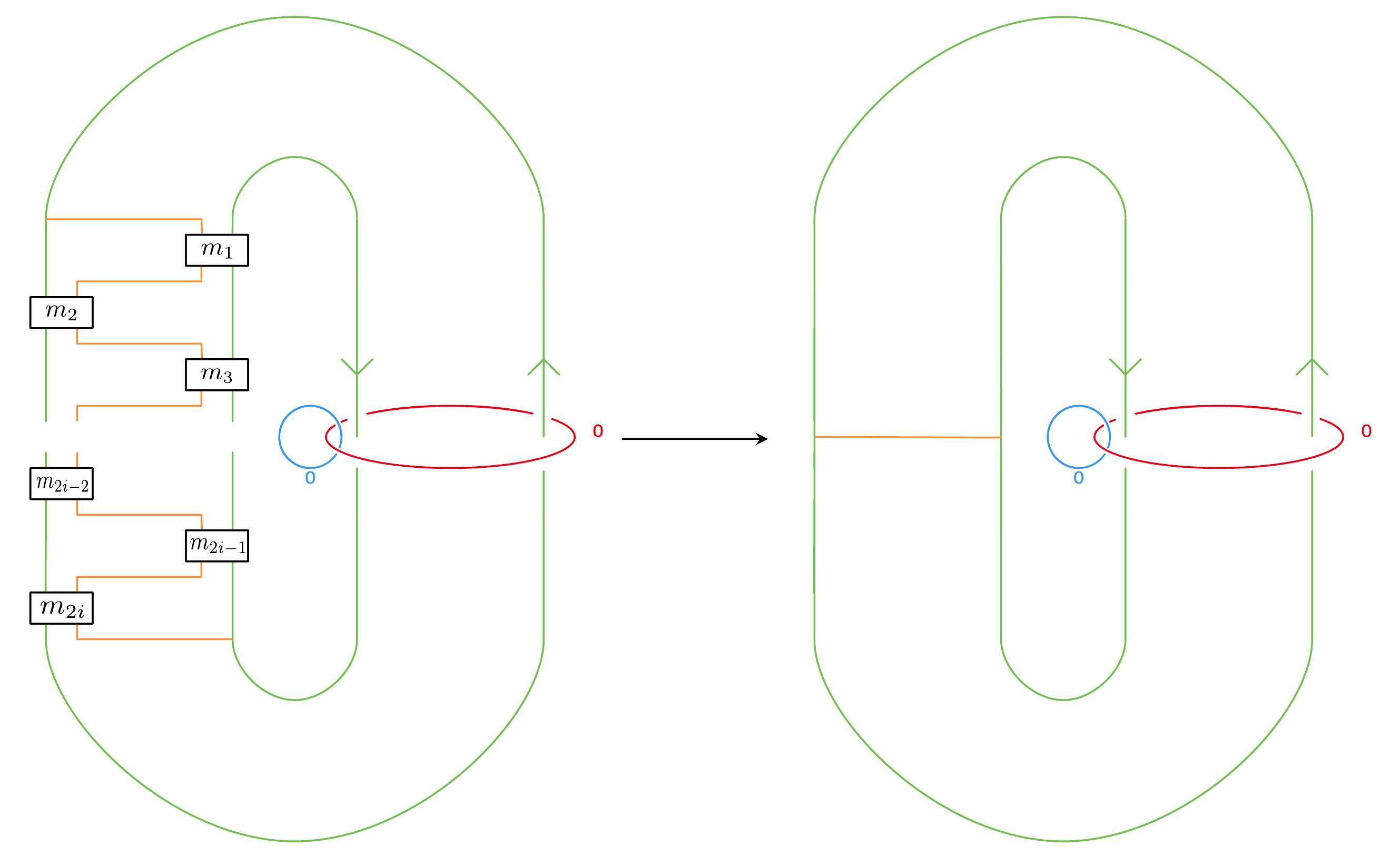}
    \caption{An isotopy between cores. An integer $m$ in the box indicates $m$-full positive twist. $C_i$ is the equator of $D_i$. Left: any arc attached to $C_1\cup C_2$ is isotopic to the orange arc for some values of $m_1,\dots, m_{2i}$. Right: the trivial arc attached to $C_1\cup C_2$. }
    \label{A3}
\end{figure}

A natural question related to the construction of $J_3$ is whether one can construct an $n$-knot in $S^n\times S^2$ which is homotopic but not isotopic to the unknot from two parallel copies of $S^n\times \{y_0\}$ by surgery along a single $(n+1)$-dimensional $1$-handle. However, the following theorem shows that this does not work for $n\geq2.$

\begin{figure}[ht]
    \centering
    \includegraphics[width=0.5\textwidth]{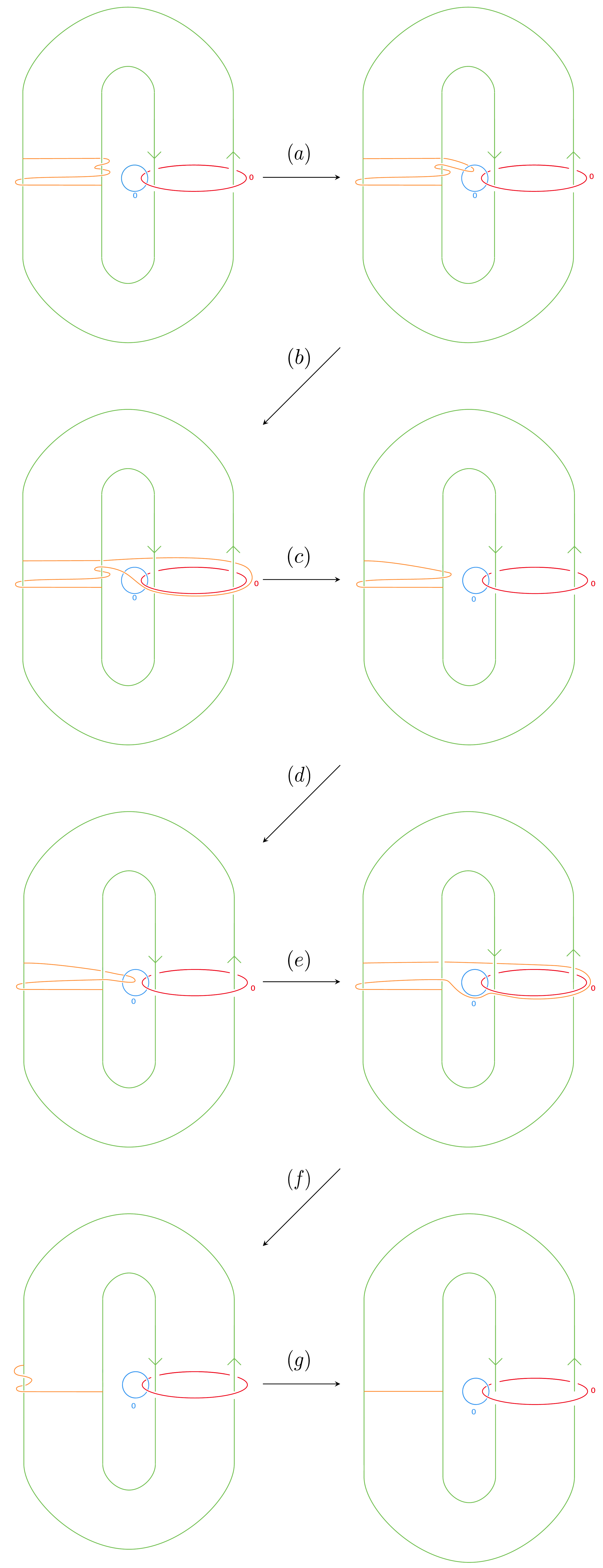}
    \caption{An isotopy between cores when $m_1=2$ and $m_2=-1.$ $(a),(d)$: isotopies pushing the orange arc into $B^{n+2}$ and pulling back. $(b),(e)$: isotopies sliding the orange arc over $2$-handle. $(c),(f),(g)$: obvious isotopies.}
    \label{A4}
\end{figure}

\begin{theorem}
Fix $n\geq2$. Let $N=(S^n\times\{y_1\}) \cup (\overline{S^n\times\{y_2\}})\subset S^n\times S^2$ with opposite orientations, where $y_1\neq y_2\in S^2$. Let $h:=e(B^1\times B^n)$ be a $1$-handle attached to $N$ for some embedding $e:B^1\times B^{n}\hookrightarrow S^n\times S^2$ such that $(S^n\times \{y_1\}) \cap h=e(\{-1\}\times B^{n})$ and $(\overline{S^n\times \{y_2\}}) \cap h=e(\{1\}\times B^{n})$. Then the result of surgery $N_h$ is isotopic to the unknot, i.e., $N_h$ bounds an $(n+1)$-ball in $S^n\times S^2$.
    \begin{proof}
    Consider the standard handle decomposition of $S^n\times S^2$ described in Remark \ref{rem:StndrdHdlDcmpstnSnXS2} and Figure \ref{A1}. Let $D_1=S^n\times\{y_1\}$ and $D_2=\overline{S^n\times\{y_2\}}.$ Now consider a $1$-handle $h$ attached to $D_1\cup D_2.$ By Corollary \ref{cor:Istpy1HndlsWICrs}, it suffices to consider the core of the $1$-handle $h$. The core of the $1$-handle can be isotoped into $\mathbb{R}^3\times\{0\}\subset \mathbb{R}^3\times \mathbb{R}^{n-2}\subset \mathbb{R}^{n+1}\cup \{\infty\}$ and furthermore isotoped into the orange arc for some integers $m_1,\dots, m_{2i}$ in the left-hand side of Figure \ref{A3} because a homotopy between arcs implies an isotopy of arcs in $S^n\times S^2$. We will show that the orange arc in the left-hand side of Figure \ref{A3} can be isotoped into the trivial arc in the right-hand side of Figure \ref{A3}. Figure \ref{A4} illustrates how to isotope the orange arc into the trivial when $m_1=2$ and $m_2=-1$.  Repeating the process illustrated in this example shows how to do the general case. Therefore the result of surgery $N_h$ along $h$ is isotopic to the result of surgery along the trivial $1$-handle which is the unknot in $S^n\times S^2$ by Corollary \ref{cor:Istpy1HndlsWICrs}.
    \end{proof}
\end{theorem}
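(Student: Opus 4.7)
The plan is to reduce the statement about $1$-handles to one about their cores and then show that every admissible core can be simplified to the trivial arc. First, by Corollary~\ref{cor:Istpy1HndlsWICrs}, the isotopy class of $N_h$ depends only on the isotopy class of the core arc $C\subset S^n\times S^2$, taken through arcs whose endpoints lie on $N=D_1\cup D_2$ and whose interiors avoid $N$. Thus it suffices to prove that for every such arc $C$ from $D_1=S^n\times\{y_1\}$ to $D_2=\overline{S^n\times\{y_2\}}$, the resulting $N_h$ bounds an $(n+1)$-ball.

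Next, I would put $C$ into a normal form using the standard handle decomposition of $S^n\times S^2$ from Remark~\ref{rem:StndrdHdlDcmpstnSnXS2}. The spheres $D_1,D_2$ correspond in the Kirby diagram to parallel copies associated with the $n$-handle, and their equators $C_1,C_2$ sit together with the Hopf link of Figure~\ref{A1} in the boundary $S^{n+1}$ of the $0$-handle. Because $n\geq 2$, a homotopy between arcs can be promoted to an ambient isotopy, so $C$ can be moved into the $3$-dimensional slice $\mathbb{R}^3\times\{0\}$ of the Kirby picture. Inside this slice, $C$ admits a combinatorial normal form as an arc threading the Hopf link with a sequence of twist integers $m_1,\dots,m_{2i}$, namely the orange arc on the left of Figure~\ref{A3}.

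The base case $m_1=\cdots=m_{2i}=0$ is essentially by inspection: $C$ is then the product arc $\{x_0\}\times[y_1,y_2]$ sitting inside the slab $W:=S^n\times[y_1,y_2]$, whose boundary is $D_1\sqcup\overline{D_2}=N$. Removing a tubular neighborhood of $C$ from $W$ produces $(S^n\setminus\mathrm{int}(B^n))\times[y_1,y_2]\cong B^n\times I\cong B^{n+1}$, an explicit $(n+1)$-ball whose boundary is $N_h$; hence $N_h$ is unknotted in this case.

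The main obstacle is the inductive step of cancelling twists. I would use a local move: for two consecutive twists of opposite sign, push the relevant piece of the arc into the $0$-handle $B^{n+2}$, slide it over the $2$-handle so that one full twist unwinds, and pull it back out. Each such move is an ambient isotopy of $S^n\times S^2$ supported away from $N$, so it is an admissible isotopy in the sense of Corollary~\ref{cor:Istpy1HndlsWICrs}, and it strictly decreases the total twist complexity. Iterating produces the trivial arc and reduces to the base case. The delicate point is that this push--slide--pull operation can genuinely be arranged to miss $N$ for every configuration of twists; verifying this carefully in the Kirby picture, as in Figure~\ref{A4} for the representative parameters $m_1=2,\,m_2=-1$, is the technical heart of the proof.
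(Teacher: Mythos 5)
Your proposal follows essentially the same route as the paper's proof: reduce to the isotopy class of the core via Corollary \ref{cor:Istpy1HndlsWICrs}, put the core into the twisted normal form of Figure \ref{A3} using the fact that homotopy implies isotopy of arcs for $n\geq 2$, and unwind the twists by pushing into the $0$-handle and sliding over the $2$-handle as in Figure \ref{A4}. Your explicit $(n+1)$-ball $(S^n\setminus \mathrm{int}(B^n))\times[y_1,y_2]$ for the trivial-core case is a nice small addition, but the argument is otherwise the paper's.
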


\begin{figure}[ht]
    \centering
    \includegraphics[width=0.45\textwidth]{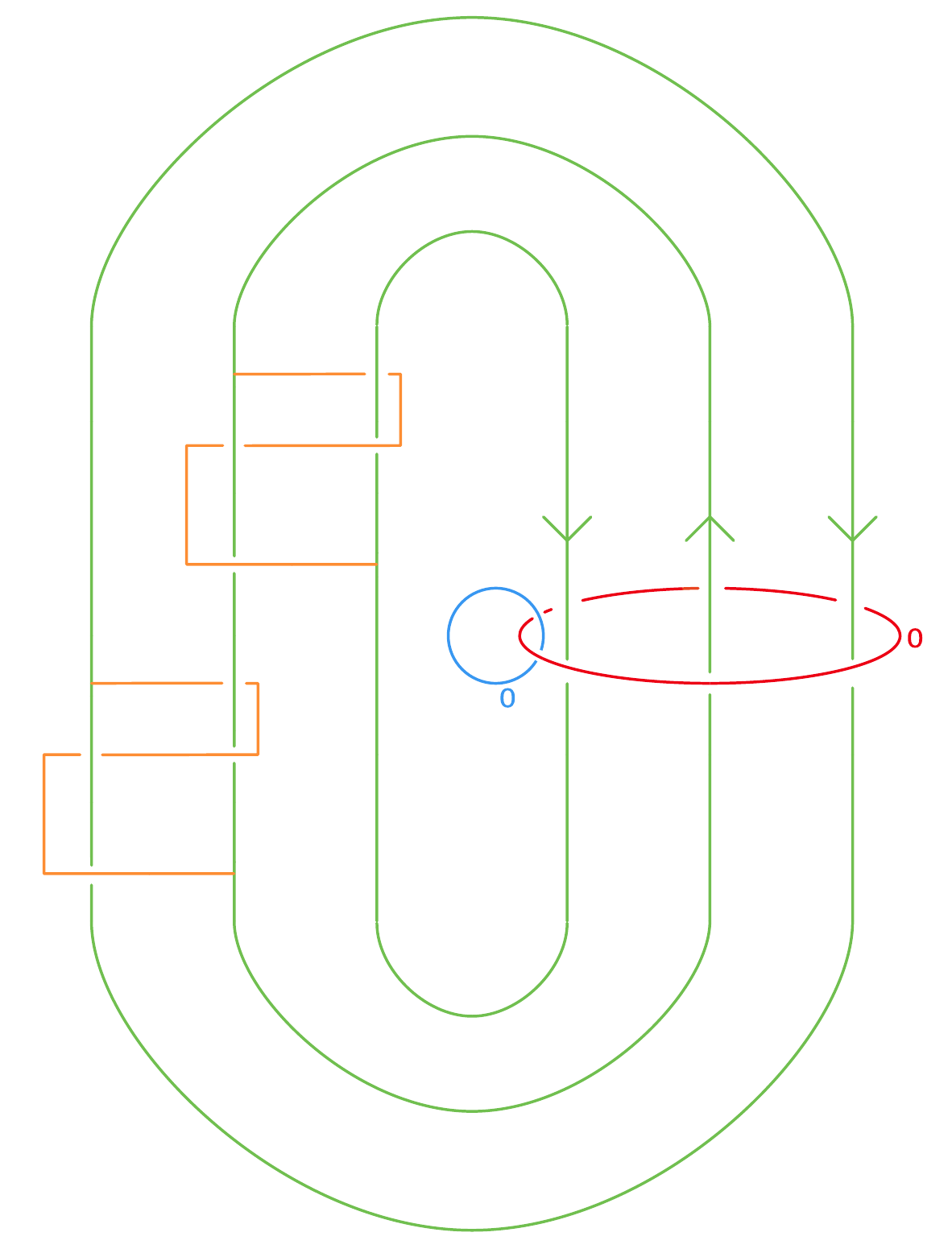}
    \caption{$K^n$ is the result of surgery of $(S^n\times\{y_1\}) \cup (\overline{S^n\times\{y_2\}}) \cup (S^n\times\{y_3\})$ along two $1$-handles $h_{12}$ and $h_{23}$ with orange cores.}
    \label{A5}
\end{figure}

\section{Main theorems}\label{s2}

\begin{definition}\label{dfn:theNknt}
Let $n\geq2$. Let $N=(S^n\times\{y_1\}) \cup (\overline{S^n\times\{y_2\}}) \cup (S^n\times\{y_3\})\subset S^n\times S^2$, where $y_1,y_2,y_3\in S^2$ are three distinct points. Let $h_{12}$ be the $1$-handle attached to $(S^n\times\{y_1\}) \cup (\overline{S^n\times\{y_2\}})$ whose core is in Figure \ref{A5}. Let  $h_{23}$ be the $1$-handle attached to $(\overline{S^n\times\{y_2\}}) \cup (S^n\times\{y_3\})$  whose core is in Figure \ref{A5}. We define an $n$-knot $K^n$ in $S^n\times S^2$ to be the result of surgery of $N$ along $h_{12}\cup h_{23}.$
\end{definition}

\begin{figure}[ht]
    \centering
    \includegraphics[width=0.45\textwidth]{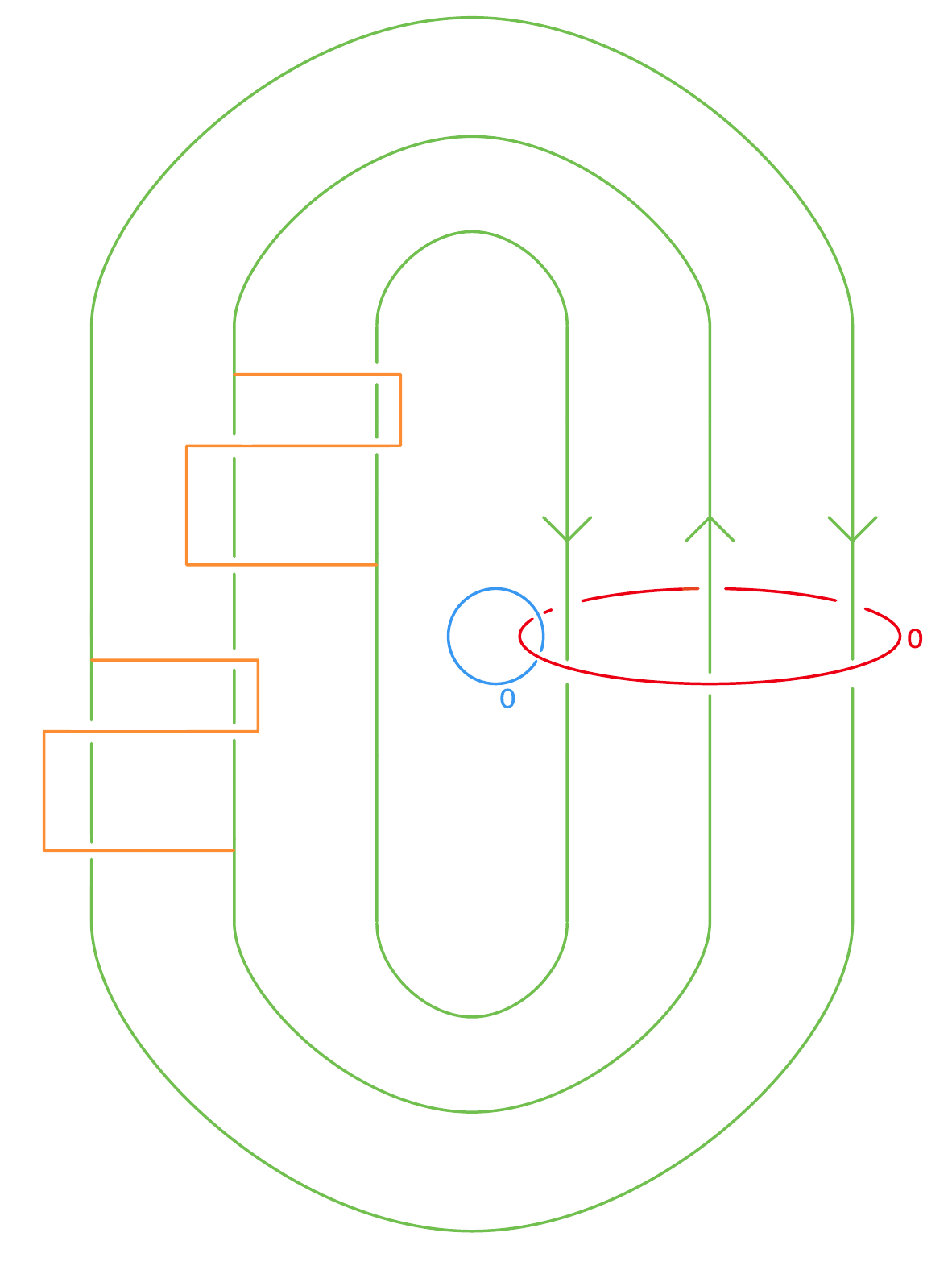}
    \caption {Isotoped cores (trivial cores) in $S^n\times S^2$ or $S^n\times S^2\times B^1$. The isotopy looks like crossing changes in the $(\mathbb{R}^3\times\{0\})$-slice.}
    \label{A6}
\end{figure}
We now see some properties of $K^n.$

\begin{proposition}\label{pro:hmtpctoSnXpt}
$K^n$ is homotopic to $S^n\times \{y_0\}$ in $S^n\times S^2,$ the geometric intersection number $|K^n\cap (\{x_0\}\times S^2)|=3,$ and the algebraic intersection number $K^n\cdot (\{x_0\}\times S^2)=1,$ where $x_0\in S^n.$
    \begin{proof}
    There exists an isotopy between the union of the two cores in Figure \ref{A5} and the union of the two trivial cores in Figure \ref{A6} such that at one moment of the isotopy the arcs intersect the green spheres at four points (like crossing changes). The result of the surgery along the two $1$-handles with the trivial cores in Figure \ref{A6} is isotopic to $S^n\times \{y_0\}$, so $K^n$ is homotopic to $S^n\times \{y_0\}$. Clearly, $|K^n\cap (\{x_0\}\times S^2)|=3,$ and the algebraic intersection number $K^n\cdot (\{x_0\}\times S^2)=1$ from the construction. 
    \end{proof}
\end{proposition}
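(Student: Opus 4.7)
The plan is to prove the three claims in turn, using Corollary \ref{cor:Istpy1HndlsWICrs} as the guiding principle for comparing surgery results via their cores.

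First, for the homotopy claim, I would construct a one-parameter family of pairs of arcs in $S^n\times S^2$ carrying the two cores of $h_{12}\cup h_{23}$ pictured in Figure \ref{A5} to the two manifestly trivial cores pictured in Figure \ref{A6}. The key point is that this family is \emph{not} an arc-isotopy rel $N$ in the sense of Corollary \ref{cor:Istpy1HndlsWICrs}: at finitely many moments of the family each arc must cross a green sphere $S^n\times\{y_i\}$ transversely, so the total geometric intersection between the cores and $N$ jumps by a pair (``like crossing changes'' on the $(\mathbb{R}^3\times\{0\})$-slice). By the tubular neighborhood theorem, the family of cores extends to a one-parameter family of $1$-handles, which after performing surgery at each time produces a homotopy from $K^n$ to the surgery obtained with trivial cores. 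The latter is visibly isotopic to $S^n\times\{y_0\}$: two short trivial tubes connect the three parallel sheets $S^n\times\{y_1\}$, $\overline{S^n\times\{y_2\}}$, $S^n\times\{y_3\}$ by a standard band-sum whose output bounds an $S^n\times I$ in $S^n\times S^2$.

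Next, for the geometric intersection count, I would choose $x_0\in S^n$ generically so that the vertical slice $\{x_0\}\times S^2$ avoids the images of both embeddings $e_{12},e_{23}:B^1\times B^n\hookrightarrow S^n\times S^2$ defining $h_{12}$ and $h_{23}$. This is possible because each $1$-handle is contained in an arbitrarily small neighborhood of its core, and the cores lie in a proper subset of $S^n\times S^2$. For such $x_0$ the set $K^n\cap(\{x_0\}\times S^2)$ coincides with $N\cap(\{x_0\}\times S^2)=\{(x_0,y_1),(x_0,y_2),(x_0,y_3)\}$, giving exactly three transverse points. The algebraic intersection is then $(+1)+(-1)+(+1)=1$, the sign $-1$ arising from the reversed orientation on $\overline{S^n\times\{y_2\}}$.

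The main obstacle, and the only non-pictorial step, is justifying that the crossing-change family of cores really extends to a genuine homotopy of embedded $1$-handles (rather than merely of arcs). The argument I have in mind is local: at each crossing moment, the core passes through a sheet of $N$ transversely, and one can take a standard model in which the $1$-handle $e(B^1\times B^n)$ is carried across the sheet by a compactly supported isotopy of $S^n\times S^2$ that no longer fixes $N$ setwise. Outside these finitely many crossing moments Corollary \ref{cor:Istpy1HndlsWICrs} applies directly. Stringing these pieces together yields the required homotopy of $n$-knots, completing the proof.
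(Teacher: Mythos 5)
Your proposal is correct and follows essentially the same route as the paper: isotope the two cores of Figure \ref{A5} to the trivial cores of Figure \ref{A6} through a family that crosses the green spheres (so the surgered spheres give a homotopy rather than an isotopy), note that the trivial-core surgery yields $S^n\times\{y_0\}$, and read off the intersection numbers $3$ and $1=(+1)+(-1)+(+1)$ directly from the construction. Your added details (extending the core family to a family of handles via the tubular neighborhood theorem, and choosing $x_0$ so that $\{x_0\}\times S^2$ misses the handles) merely make explicit what the paper leaves as ``clearly from the construction.''
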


\begin{proposition}\label{pro:istpctoSnXptXpt}
$K^n\times\{0\}$ in $S^n\times S^2\times B^1$ is isotopic to $S^n\times \{y_0\}\times \{0\}$.
    \begin{proof}
    We can isotope each core of the $1$-handles in Figure \ref{A5} to the trivial core in Figure \ref{A6} using the extra $B^1$ factor without intersections between arcs and green spheres. By Corollary \ref{cor:Istpy1HndlsWICrs}, $K^n\times \{0\}$ is isotopic to $S^n\times \{y_0\}\times \{0\},$ which is isotopic to the result of the surgery along the trivial cores in Figure \ref{A6}.
    \end{proof}
\end{proposition}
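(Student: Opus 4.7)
The plan is to apply Corollary \ref{cor:Istpy1HndlsWICrs} inside the thickened ambient manifold $M := S^n\times S^2\times B^1$, working with the union $N' := N\times\{0\}$, where $N = (S^n\times\{y_1\}) \cup (\overline{S^n\times\{y_2\}}) \cup (S^n\times\{y_3\})$. By the very definition of $K^n$, the knot $K^n\times\{0\}\subset M$ is the result of surgery of $N'$ along the $1$-handles $h_{12}\times\{0\}$ and $h_{23}\times\{0\}$, whose cores are the two arcs drawn in Figure \ref{A5}, regarded now as sitting in the slice $S^n\times S^2\times\{0\}$. On the other hand, the result of surgery of $N'$ along the two $1$-handles with trivial cores (as in Figure \ref{A6}) is manifestly isotopic to $S^n\times\{y_0\}\times\{0\}$. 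So it suffices to isotope each core of Figure \ref{A5} to the corresponding trivial core of Figure \ref{A6} through a family of arcs in $M$ whose endpoints stay in $N'$ and whose interiors remain disjoint from $N'$.

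The key input compared to the purely $(n+2)$-dimensional setting of Proposition \ref{pro:hmtpctoSnXpt} is that the extra $B^1$ factor lets us avoid the crossing-change moments that obstructed the analogous argument there. First I would record the singular homotopy $\alpha:I\times[0,1]\to S^n\times S^2$ from the core of Figure \ref{A5} to the trivial core of Figure \ref{A6} used in the proof of Proposition \ref{pro:hmtpctoSnXpt}; its only failures of embeddedness occur at finitely many times when an interior point of the arc transversely crosses one of the green spheres $S^n\times\{y_i\}$. Next I would choose a smooth function $\phi:I\times[0,1]\to B^1$ vanishing on $\partial I\times[0,1]$ and at $t=0,1$, and nonzero at each point where $\alpha$ hits $N$, and define the lifted family $\tilde\alpha(s,t):=(\alpha(s,t),\phi(s,t))\in M$. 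At every problematic instant the $B^1$-coordinate is nonzero, so $\tilde\alpha$ avoids $N\times\{0\}$ in its interior; meanwhile the endpoints and the start/end arcs are unchanged because $\phi$ vanishes there. This realizes, on the level of cores, the isotopy I want.

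The main obstacle I anticipate is the (routine) verification that $\tilde\alpha$ can be taken to be a genuine isotopy of embedded arcs in $M$, not merely a homotopy with a few remaining self-intersections. A standard transversality argument handles this: arcs are $1$-dimensional while $N\times\{0\}$ has codimension $3$ in $M$, so a generic $C^\infty$-small perturbation of $\tilde\alpha$ both keeps each slice $\tilde\alpha(\cdot,t)$ embedded and keeps interiors disjoint from $N'$ throughout the parameter range. Once this isotopy of cores is in hand, Corollary \ref{cor:Istpy1HndlsWICrs} applied separately to $h_{12}\times\{0\}$ and to $h_{23}\times\{0\}$ upgrades it to an ambient isotopy of $1$-handles in $M$ fixing $N'$ setwise, and the induced surgery on $N'$ is isotopic to $S^n\times\{y_0\}\times\{0\}$, as desired.
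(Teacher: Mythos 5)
Your proposal is correct and follows essentially the same route as the paper: lift the crossing-change homotopy of the cores of $h_{12}$ and $h_{23}$ into the extra $B^1$ factor so the arcs avoid the green spheres, then invoke Corollary \ref{cor:Istpy1HndlsWICrs} to conclude the surgered spheres are isotopic. Your added details (the bump function into $B^1$ and the transversality/general position step) merely make explicit what the paper's one-line argument leaves implicit.
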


\begin{figure}[ht]
    \centering
    \includegraphics[width=0.45\textwidth]{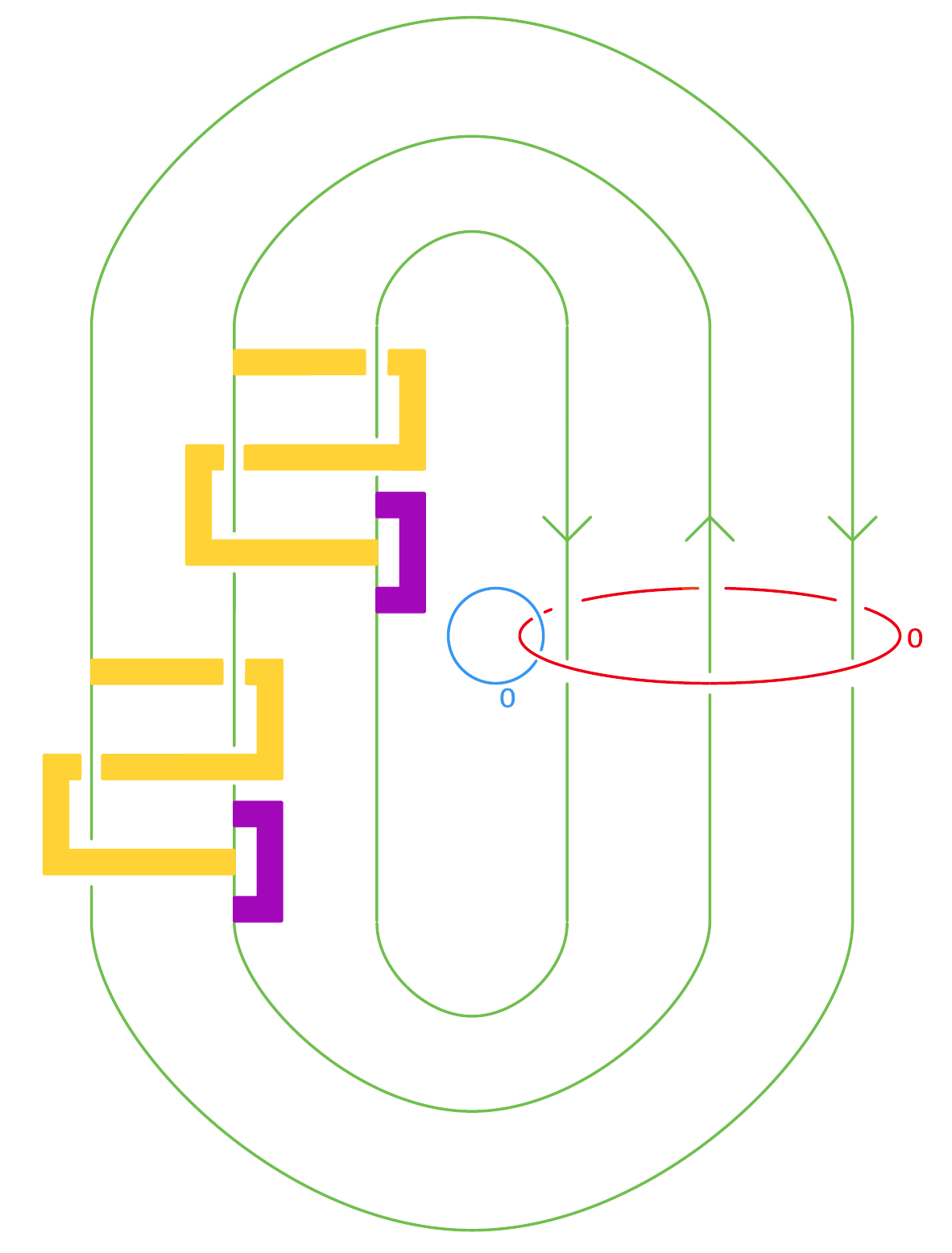}
    \caption{A handle decomposition of $K^n$ in the handle decomposition of $S^n\times S^2$ consists of three $n$-dimensional $0$-handle, two $1$-handles (yellow), two $(n-1)$-handles (purple), and three $n$-handles (not drawn).}
    \label{A7}
\end{figure}

\begin{remark}[A handle decomposition of $K^n$]\label{rem:HDofKn}
$K^n$ is obtained from three parallel copies of $S^n\times \{y_0\}$ by surgery along two $(n+1)$-dimensional $1$-handles in Figure \ref {A5}. Here, the green equator of each parallel copy bounds a properly embedded trivial $n$-ball ($n$-dimensional $0$-handle) in $B^{n+2}$ and a copy of the core ($n$-dimensional $n$-handle) of the $(n+2)$-dimensional $n$-handle of $S^n\times S^2.$ Surgery along a $1$-handle $B^1\times B^n$ removes $(\{-1\} \times B^n) \cup (\{1\}\times B^n)$ and glues $B^1\times S^{n-1}$ so $B^1\times S^{n-1}=B^1\times (B^n_{-}\cup B^n_{+})=(B^1\times B^{n-1}_{-})\cup (B^1\times B^{n-1}_+)$ consists of an $n$-dimensional $1$-handle (yellow) and an $(n-1)$-handle (purple) in Figure \ref {A7}. Therefore, $K^n$ has a handle decomposition with three $0$-handles, two $1$-handles, two $(n-1)$-handles and three $n$-handles. For example, Figure \ref{A7} is a banded unlink diagram for $K^2$ in $S^2\times S^2$ when $n=2.$ See \cite{hughes2020isotopies} for more details of banded unlink diagrams.
\end{remark}

\begin{figure}[ht]
    \centering
    \includegraphics[width=0.45\textwidth]{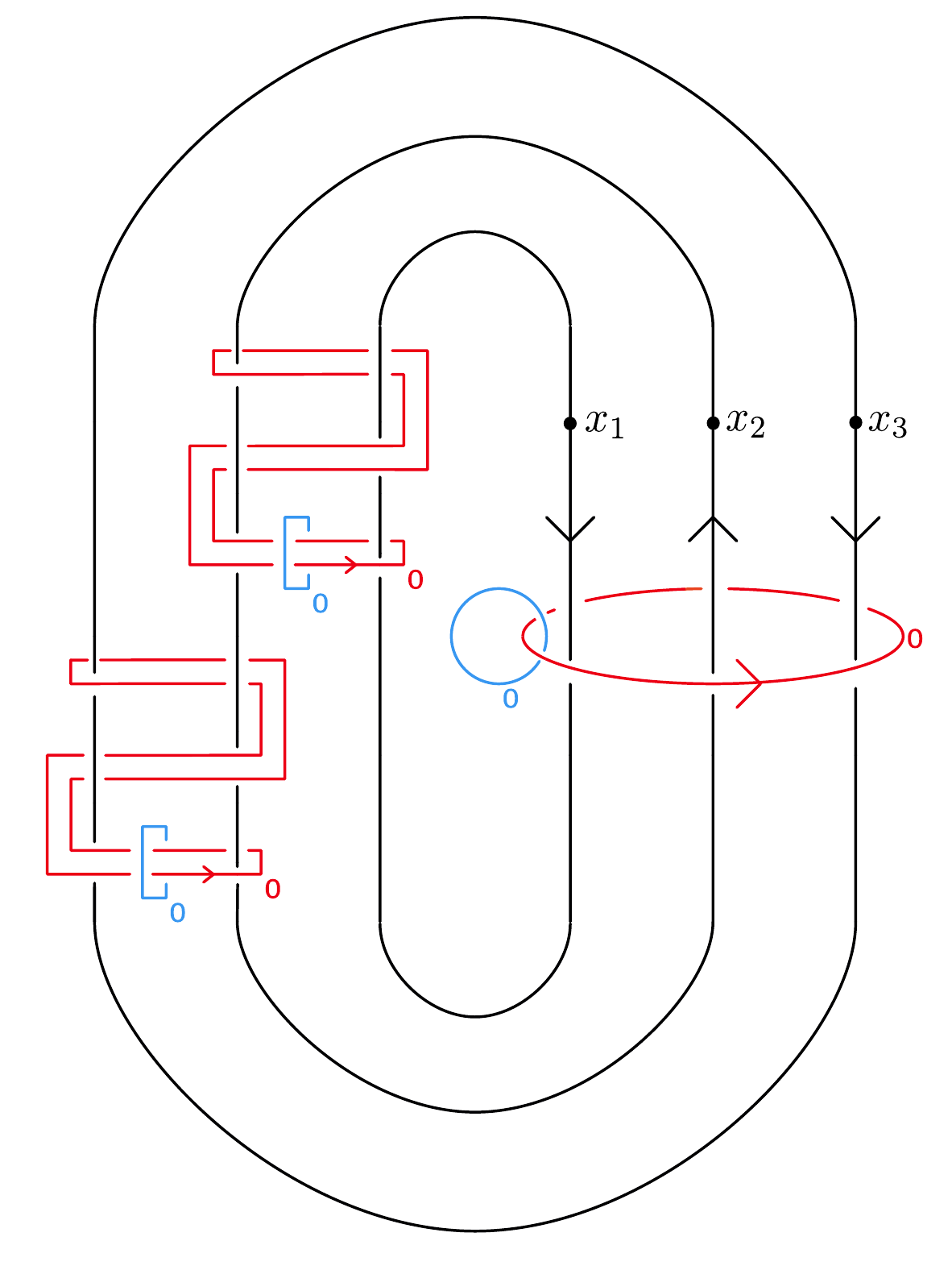}
    \caption{A handle decomposition of $S^n\times S^2\setminus int(\nu (K^n))$ consists of a single $(n+2)$-dimensional $0$-handle, three $1$-handles (black dotted $(n-1)$-spheres), three $2$-handles (red $1$-spheres), three $n$-handles (blue $(n-1)$-spheres) and two $(n+1)$-handles (not drawn).}
    \label{A8}
\end{figure}

\begin{remark}[A handle decomposition of $S^n\times S^2\setminus int(\nu (K^n))$] \label{rem:HDofCmplmnt} Each $i$-handle of $K^n$ determines an $(i+1)$-handle for $S^n\times S^2\setminus int(\nu (K^n))$ when the codimension is $2$. (See chapter 6.2 in \cite{gompf19994}.) We note that the number of $(n+1)$-handles for the complement induced by the $n$-handles for $K^n$ is one less than the number of $n$-handles and a dotted $(n-1)$-sphere means carving a properly embedded trivial $n$-ball in $B^{n+2}$ whose boundary is a dotted $(n-1)$-sphere, which is equivalent to attaching a $1$-handle. Therefore, the handle decomposition of $K^n$ in Remark \ref{rem:HDofKn} gives the handle decomposition of $S^n\times S^2\setminus int(\nu (K^n))$ in Figure \ref{A8}.
\end{remark}

\begin{proposition}\label{pro:FndmntlGpofCmplmnt}
$\pi_1(S^n\times S^2\setminus int(\nu (K^n)))$ is non-trivial.
    \begin{proof}
    Consider the handle decomposition of the complement $S^n\times S^2\setminus int(\nu (K^n))$ of $K^n$ in $S^n\times S^2$ in Figure \ref{A8}. A black dotted $(n-1)$-sphere and a red $1$-sphere represent a $1$-handle and a $2$-handle, respectively, so the fundamental group $\pi_1(S^n\times S^2\setminus int(\nu (K^n)))$ has the following presentation: 
        \begin{equation*}
        \langle x_1,x_2,x_3|x_1x_2x_1x_2^{-1}x_1^{-1}x_2^{-1}=1,x_2^{-1}x_3^{-1}x_2^{-1}x_3x_2x_3=1,x_1^{-1}x_2x_3^{-1}=1 \rangle.
        \end{equation*} 
        Delete $x_3$ by using the third relation $x_3=x_1^{-1}x_2\Leftrightarrow x^{-1}x_2x_3^{-1}=1:$
        \begin{equation*}
        \langle x_1,x_2|x_1x_2x_1x_2^{-1}x_1^{-1}x_2^{-1}=1,x_2^{-2}x_1x_2^{-1}x_1^{-1}x_2^2x_1^{-1}x_2=1\rangle.
        \end{equation*}
        Simplify the second relation by multiplying both sides by $x_2$ on the left and $x_2^{-1}$ on the right:
        \begin{equation*}
        \langle x_1,x_2|x_1x_2x_1x_2^{-1}x_1^{-1}x_2^{-1}=1,x_2^{-1}x_1x_2^{-1}x_1^{-1}x_2^2x_1^{-1}=1\rangle.
        \end{equation*}
        Use the substitution $x_1=ab^{-1}$ and $x_2=b^2a^{-1}$, in other words let $a=x_1x_2x_1$ and $b=x_2x_1$:
        \begin{equation*}
        \langle a,b|a^2b^{-3}=1,ab^{-2}ab^{-1}ab^{-1}a^{-1}b^2a^{-1}b^2a^{-1}ba^{-1}=1\rangle\\.
        \end{equation*}
        Simplify the second relation by multiplying both sides by $a^{-1}$ on the left and $a$ on the right :
        \begin{equation*}
        \langle a,b|a^2b^{-3}=1,b^{-2}ab^{-1}ab^{-1}a^{-1}b^2a^{-1}b^2a^{-1}b=1\rangle\\.
        \end{equation*}
        Simplify the second relation by multiplying both sides by $b$ on the left and $b^{-1}$ on the right:
        \begin{equation*}
        \langle a,b|a^2b^{-3}=1,b^{-1}ab^{-1}ab^{-1}a^{-1}b^2a^{-1}b^2a^{-1}=1\rangle\\.
        \end{equation*}
        Include the relation $a^2=b^3=1:$
        \begin{equation*}
        \langle a,b|a^2=b^{3}=1,b^{-1}ab^{-1}ab^{-1}a^{-1}b^2a^{-1}b^2a^{-1}=1\rangle\\.
        \end{equation*}
        Multiply both sides by $a$ on the right in the second relation:
        \begin{equation*}
        \langle a,b|a^2=b^{3}=1,b^{-1}ab^{-1}ab^{-1}a^{-1}b^2a^{-1}b^2=a\rangle\\.
        \end{equation*}
        Multiply both sides by $b$ on the right and use $b^3=1$ in the second relation:
        \begin{equation*}
        \langle a,b|a^2=b^{3}=1,b^{-1}ab^{-1}ab^{-1}a^{-1}b^2a^{-1}=ab\rangle\\.
        \end{equation*}
        Multiply both sides by $a$ on the right in the second relation:
        \begin{equation*}
        \langle a,b|a^2=b^{3}=1,b^{-1}ab^{-1}ab^{-1}a^{-1}b^2=aba\rangle\\.
        \end{equation*}
        Multiply both sides by $b$ on the right and use $b^3=1$ in the second relation:
        \begin{equation*}
        \langle a,b|a^2=b^{3}=1,b^{-1}ab^{-1}ab^{-1}a^{-1}=abab\rangle\\.
        \end{equation*}
        Multiply both sides by $a$ on the right in the second relation:
        \begin{equation*}
        \langle a,b|a^2=b^{3}=1,b^{-1}ab^{-1}ab^{-1}=ababa\rangle\\.
        \end{equation*}
        Multiply both sides by $b$ on the right in the second relation:
        \begin{equation*}
        \langle a,b|a^2=b^{3}=1,b^{-1}ab^{-1}a=ababab\rangle\\.
        \end{equation*}
        Multiply both sides by $a$ on the right and use $a^2=1$ in the second relation:
        \begin{equation*}
        \langle a,b|a^2=b^{3}=1,b^{-1}ab^{-1}=abababa\rangle\\.
        \end{equation*}
        Multiply both sides by $b$ on the right in the second relation:
        \begin{equation*}
        \langle a,b|a^2=b^{3}=1,b^{-1}a=abababab\rangle\\.
        \end{equation*}
        Multiply both sides by $a$ on the right and use $a^2=1$ in the second relation:
        \begin{equation*}
        \langle a,b|a^2=b^{3}=1,b^{-1}=ababababa\rangle\\.
        \end{equation*}
        Multiply both sides by $b$ on the right in the second relation:
        \begin{equation*}
        \langle a,b|a^2=b^{3}=1,1=ababababab\rangle\\.
        \end{equation*}
        Simplify the relations and get the following presentation:
        \begin{equation*}
        \langle a,b|a^2=b^{3}=(ab)^5=1\rangle\\,
        \end{equation*}which is isomorphic to the alternating group $A_5$ of degree $5$. Therefore, $\pi_1(S^n\times S^2\setminus int(\nu (K^n)))$ is non-trivial.
    \end{proof}
\end{proposition}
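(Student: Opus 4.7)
The plan is to read off a finite presentation of $\pi_1(S^n\times S^2\setminus int(\nu(K^n)))$ directly from the explicit handle decomposition in Figure \ref{A8}, and to identify a quotient of this group as the alternating group $A_5$. Since the ambient manifold has dimension $n+2\geq 4$ and its handles of index at most $2$ consist of a single $0$-handle, three $1$-handles (the dotted $(n-1)$-spheres), and three $2$-handles (the red circles), the fundamental group is presented by three generators $x_1,x_2,x_3$ --- meridians of the $1$-handles --- with three relators obtained by tracing each red $2$-handle circle through the $1$-handles in Wirtinger fashion. For $n\geq 3$ the higher-index handles have attaching spheres of dimension $\geq 2$ and so do not affect $\pi_1$; for $n=2$, one checks from the local product structure near each $n$-handle of $K^n$ that the two extra $2$-handles attach along circles already nullhomotopic in the lower-index handlebody, hence contribute trivial relations.

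First I would write down the three relators by inspection of Figure \ref{A8}. The next step is the Tietze elimination $x_3 = x_1^{-1}x_2$ via the shortest relator, leaving a two-generator, two-relator presentation. A judicious change of generators $a := x_1 x_2 x_1$, $b := x_2 x_1$ (inverse: $x_1 = ab^{-1}$, $x_2 = b^2 a^{-1}$) is then chosen so that the first relator collapses to $a^2 b^{-3} = 1$, exposing the expected $\langle 2,3,5\rangle$ triangle-group structure.

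From here the strategy is to pass to the quotient obtained by imposing $a^2 = 1$ (so also $b^3 = 1$), and then use these torsion relations to reduce the remaining (long) relator in $a$ and $b$. A repeated pattern of multiplying by $a$ or $b$ on the right and cancelling letters at the ends of the word should collapse the long relator to $(ab)^5 = 1$, yielding the quotient presentation
\[
\langle a,b \mid a^2,\; b^3,\; (ab)^5\rangle \;\cong\; A_5.
\]
Since $A_5$ is a nontrivial quotient of $\pi_1(S^n\times S^2\setminus int(\nu(K^n)))$, the fundamental group itself is nontrivial, completing the proof.

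The main obstacle is purely computational: carrying out the word reduction in $\langle a,b\rangle$ without error over the roughly dozen Tietze moves required. Conceptually nothing is subtle once one has Figure \ref{A8}; the only piece of ingenuity is the substitution $a = x_1 x_2 x_1$, $b = x_2 x_1$, which is dictated by the goal of producing the classical $\langle 2,3,5\rangle$-presentation of $A_5$. A secondary (minor) technical point is justifying that for $n=2$ the $2$-handles arising from the $n$-handles of $K^n$ introduce no extra relations; this should be immediate from the banded unlink picture in Figure \ref{A7}.
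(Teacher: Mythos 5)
Your proposal follows essentially the same route as the paper: read the three-generator, three-relator presentation off the handle decomposition in Figure \ref{A8}, eliminate $x_3=x_1^{-1}x_2$, substitute $a=x_1x_2x_1$, $b=x_2x_1$, impose $a^2=b^3=1$, and collapse the remaining relator to $(ab)^5=1$, exhibiting the nontrivial quotient $\langle a,b\mid a^2=b^3=(ab)^5=1\rangle\cong A_5$. This matches the paper's argument step for step, including the quotient trick used to conclude nontriviality.
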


\begin{corollary}\label{cor:NIstpctoSnXpt}
$K^n$ is not isotopic to $S^n\times \{y_0\}$ in $S^n\times S^2.$
    \begin{proof}
    Suppose that $K^n$ is isotopic to $S^n\times \{y_0\}$. Then $S^n\times S^2 \setminus int(\nu (K^n))$ is diffeomorphic to $B^n\times S^2,$ so $\pi_1(S^n\times S^2\setminus int(\nu (K^n)))$ is trivial, which is a contradiction to Proposition \ref{pro:FndmntlGpofCmplmnt}.
    \end{proof}
\end{corollary}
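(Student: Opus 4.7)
The plan is to argue by contradiction and reduce the statement to the fundamental group computation already established in Proposition \ref{pro:FndmntlGpofCmplmnt}. Specifically, I would assume that $K^n$ is isotopic to $S^n\times\{y_0\}$ in $S^n\times S^2$ and then show that this forces the complement of a tubular neighborhood of $K^n$ to be simply connected, directly contradicting the conclusion that $\pi_1(S^n\times S^2\setminus\mathrm{int}(\nu(K^n)))\cong A_5$.

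First I would invoke the isotopy extension theorem: an isotopy of submanifolds in a closed manifold extends to an ambient isotopy of $S^n\times S^2$, and an ambient isotopy can be arranged to carry a tubular neighborhood of $K^n$ onto a tubular neighborhood of $S^n\times\{y_0\}$. Consequently the pairs $(S^n\times S^2,\nu(K^n))$ and $(S^n\times S^2,\nu(S^n\times\{y_0\}))$ are diffeomorphic, and passing to complements gives a diffeomorphism
\[
S^n\times S^2\setminus\mathrm{int}(\nu(K^n))\;\cong\;S^n\times S^2\setminus\mathrm{int}(\nu(S^n\times\{y_0\})).
\]
The right-hand side, however, is diffeomorphic to $S^n\times B^2$ (equivalently $B^n\times S^2$ under the product decomposition used in Remark \ref{rem:StndrdHdlDcmpstnSnXS2}), since removing an open tubular neighborhood of the $S^n$-fiber in the trivial $S^2$-bundle just removes a small open $2$-disk from each $S^2$-fiber.

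Since $S^n\times B^2$ is simply connected for $n\geq 2$ (it deformation retracts onto $S^n$), the assumed isotopy would force $\pi_1(S^n\times S^2\setminus\mathrm{int}(\nu(K^n)))=1$. This directly contradicts Proposition \ref{pro:FndmntlGpofCmplmnt}, which shows that this fundamental group is isomorphic to $A_5$, and the corollary follows. The main substantive work has already been done in establishing the non-triviality of $\pi_1$ of the complement; the only subtle point left is the invocation of isotopy extension and the identification of the complement of the standard sphere $S^n\times\{y_0\}$ with $S^n\times B^2$, both of which are routine.
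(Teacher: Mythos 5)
Your proposal is correct and follows essentially the same route as the paper: assume the isotopy, use isotopy extension to identify the knot complement with the complement of the standard $S^n\times\{y_0\}$, observe that this is simply connected for $n\geq 2$, and contradict Proposition \ref{pro:FndmntlGpofCmplmnt}. One small remark: your identification of that standard complement as $S^n\times B^2$ is the accurate one (the paper writes $B^n\times S^2$, and your parenthetical calling the two ``equivalent'' is not literally true since their boundaries differ), but either space is simply connected, so the contradiction goes through unchanged.
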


We now construct a contractible $(n+3)$-manifold by using the $n$-knot $K^n.$

\begin{definition}\label{def:MainMfld}
Let $K^n$ be the $n$-knot in $S^n\times S^2$ in Definition \ref{dfn:theNknt}. Let $\phi:S^n\times B^2\hookrightarrow S^n\times S^2=\partial(S^n\times B^3)$ be an embedding such that $\phi(S^n\times\{0\})=K^n.$ We define $X_{K^n}:=S^n\times B^3\cup_{\phi}B^{n+1}\times B^2$ to be the $(n+3)$-manifold obtained from $S^n\times B^3$ by attaching a single $(n+1)$-handle along $\phi.$ 
\end{definition}

\begin{remark}
There is a unique framing of the attaching sphere $\phi(S^n\times\{0\})=K^n$ because $\pi_n(SO(2))$ is trivial when $n\geq2$. Therefore $X_{K^n}$ is uniquely determined by the isotopy class of $K^n.$
\end{remark}

\begin{proposition}\label{pro:XprdctIntvldiffeotoball}
$X_{K^n}\times B^1$ is diffeomorphic to $B^{n+4}.$
    \begin{proof}
    Let $\phi:S^n\times B^2\hookrightarrow S^n\times S^2$ be the embedding such that $\phi(S^n\times\{0\})=K^n$ in Definition \ref{dfn:theNknt} and $\Phi:S^n\times B^2\times B^1\hookrightarrow S^n\times S^2\times B^1$ be an embedding defined by $\Phi(x,y,t)=(\phi(x,y),t).$ Then $X_{K^n}\times B^1=(S^n\times B^3\cup_{\phi} B^{n+1}\times B^2)\times B^1\cong S^n\times B^3\times B^1\cup_{\Phi} B^{n+1}\times B^2\times B^1.$ By Proposition \ref{pro:istpctoSnXptXpt}, $\Phi(S^n\times\{0\}\times\{0\})=K^n\times\{0\}$ is isotopic to $S^n\times\{y_0\}\times\{0\}$ in $S^n\times S^2\times B^1 \subset\partial(S^n\times B^3\times B^1)$  so the attaching sphere of the $(n+1)$-handle intersects the belt sphere of the $n$-handle geometrically once and $S^n\times B^3\times B^1\cup_{\Phi} B^{n+1}\times B^2\times B^1\cong B^{n+4}.$ Therefore $X_{K^n}\times B^1\cong B^{n+4}.$
    \end{proof}
\end{proposition}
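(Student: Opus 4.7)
The plan is to exploit the extra $B^1$ factor to isotope the attaching sphere of the $(n+1)$-handle into a standard position where it cancels the $n$-handle, leaving only a single $0$-handle. First I would rewrite $X_{K^n}\times B^1$ as a handle attachment: since $X_{K^n}$ is obtained from $S^n\times B^3$ by attaching an $(n+1)$-handle along $\phi$, taking the product with $B^1$ yields
\[
X_{K^n}\times B^1\cong (S^n\times B^3\times B^1)\cup_{\Phi}(B^{n+1}\times B^2\times B^1),
\]
where $\Phi(x,y,t)=(\phi(x,y),t)$. The attaching sphere of this $(n+1)$-handle is $\Phi(S^n\times\{0\}\times\{0\})=K^n\times\{0\}$, sitting in $S^n\times S^2\times B^1\subset\partial(S^n\times B^3\times B^1)$.

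Next I would view $S^n\times B^3\times B^1\cong S^n\times B^4$ as $B^{n+4}$ with a single $n$-handle attached, so that $X_{K^n}\times B^1$ acquires a handle decomposition consisting of one $0$-handle, one $n$-handle, and one $(n+1)$-handle. The belt sphere of the $n$-handle is (up to isotopy) $\{x_0\}\times S^2\times\{0\}$, which is a natural geometric dual to any sphere of the form $S^n\times\{y_0\}\times\{0\}$.

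The crucial step — and the reason for passing to the thickened manifold in the first place — is to apply Proposition \ref{pro:istpctoSnXptXpt}, which provides an ambient isotopy of $S^n\times S^2\times B^1$ carrying $K^n\times\{0\}$ to $S^n\times\{y_0\}\times\{0\}$. Since $\pi_n(SO(2))=0$ for $n\geq 2$, the framing is unique, so this isotopy of attaching spheres extends to an isotopy of the attaching map of the $(n+1)$-handle. After this isotopy, the attaching sphere meets the belt sphere $\{x_0\}\times S^2\times\{0\}$ of the $n$-handle transversely in a single point, so the $n$- and $(n+1)$-handles form a cancelling pair, leaving $B^{n+4}$.

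The main obstacle one might expect here is the isotopy of the attaching sphere to standard position; indeed this is genuinely false inside $S^n\times S^2$ by Corollary \ref{cor:NIstpctoSnXpt}, and the whole construction is engineered precisely so that this isotopy becomes available in one dimension higher. Since Proposition \ref{pro:istpctoSnXptXpt} has already supplied this isotopy, the remaining work is just the handle-cancellation bookkeeping sketched above.
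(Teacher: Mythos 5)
Your proposal is correct and follows essentially the same route as the paper: thicken by $B^1$, invoke Proposition \ref{pro:istpctoSnXptXpt} to isotope the attaching sphere $K^n\times\{0\}$ to $S^n\times\{y_0\}\times\{0\}$, and cancel the $(n+1)$-handle against the $n$-handle of $S^n\times B^3\times B^1$ to obtain $B^{n+4}$. The extra remarks on framing uniqueness and the explicit identification of the belt sphere are fine elaborations of the same argument.
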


\begin{remark}[A handle decomposition of $\partial X_{K^n}$]\label{rem:HDofbndry} $\partial X_{K^n}$ is obtained from $S^n\times S^2$ by surgery along $K^n,$ i.e., $\partial X_{K^n}=(S^n\times S^2\setminus int(\nu(K^n))) \cup_{\phi|_{S^n\times S^1}} B^{n+1}\times S^1,$ where $\phi(S^n\times B^2)=\nu (K^n).$ We can consider $B^{n+1}\times S^1$ as the union of an $(n+2)$-dimensional $(n+1)$-handle and an $(n+2)$-handle. Therefore a handle decomposition of $X_{K^n}$ is obtained from the handle decomposition of $S^n\times S^2\setminus int(\nu(K^n))$ in Remark \ref{rem:HDofCmplmnt} by attaching an $(n+1)$-handle and an $(n+2)$-handle. Therefore $\partial X_{K^n}$ admits a handle decomposition with a $0$-handle, three $1$-handles, three $2$-handles, three $n$-handles, three $(n+1)$-handles and an $(n+2)$-handle.
\end{remark}

\begin{proposition}\label{pro:FndmntlGrSame}
$\pi_1(\partial X_{K^n})\cong\pi_1(S^n\times S^2\setminus int(\nu (K^n))).$
    \begin{proof}
    In Remark \ref{rem:HDofbndry},  $B^{n+1}\times S^1$ is the union of an $(n+1)$-handle and an $(n+2)$-handle, which does not affect the fundamental group of $\partial X_{K^n}.$ Therefore $\pi_1(\partial X_{K^n})\cong\pi_1(S^n\times S^2\setminus int(\nu (K^n))).$
    \end{proof}
\end{proposition}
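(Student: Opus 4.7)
The plan is to apply the Seifert--van Kampen theorem to the decomposition
$\partial X_{K^n} = W \cup_{\phi|_{S^n\times S^1}} H$
recorded in Remark \ref{rem:HDofbndry}, where $W := S^n\times S^2\setminus int(\nu(K^n))$ and $H := B^{n+1}\times S^1$ meet along the gluing region $S^n \times S^1$. First I would thicken $W$ and $H$ to an open cover of $\partial X_{K^n}$ whose intersection deformation retracts onto $S^n \times S^1$, so that van Kampen is applicable.

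Next, because $n \geq 2$, the circle factor is a deformation retract of both $H = B^{n+1} \times S^1$ and $S^n \times S^1$, so the inclusion $S^n \times S^1 \hookrightarrow H$ induces an isomorphism on $\pi_1$ (both groups being $\mathbb{Z}$, generated by the $S^1$ factor). Therefore the van Kampen pushout $\pi_1(W) *_{\pi_1(S^n\times S^1)} \pi_1(H)$ collapses along this right-hand isomorphism and yields $\pi_1(\partial X_{K^n}) \cong \pi_1(W)$, which is exactly the desired conclusion.

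As an equivalent viewpoint hinted at in Remark \ref{rem:HDofbndry}, I could instead equip $H$ with the handle decomposition induced by the CW structure $S^1 = e^0 \cup e^1$, which presents $H$ as one $(n+2)$-dimensional $(n+1)$-handle plus one $(n+2)$-handle attached to $W$. Both handles have index $\geq n+1 \geq 3$, so each attaching sphere ($S^n$ and $S^{n+1}$ respectively) is simply connected, and attaching each one contributes no new generators or relations to $\pi_1$ by van Kampen. There is no real obstacle here; the only delicate point is bookkeeping precisely where the hypothesis $n \geq 2$ enters, namely to ensure that the attaching $n$-sphere of the first handle is simply connected and, equivalently, that $\pi_1(S^n\times S^1) \to \pi_1(H)$ is an isomorphism.
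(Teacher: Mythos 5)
Your proposal is correct and follows essentially the same route as the paper: the paper also uses the decomposition $\partial X_{K^n}=(S^n\times S^2\setminus int(\nu(K^n)))\cup B^{n+1}\times S^1$ from Remark \ref{rem:HDofbndry} and notes that the glued piece, viewed as an $(n+1)$-handle and an $(n+2)$-handle (both of index $\geq 3$ since $n\geq 2$), does not change $\pi_1$. Your explicit van Kampen computation along $S^n\times S^1$ is simply a more detailed writing-out of that same observation.
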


\begin{corollary}\label{cor:nonsmplycnctdhlgysphre}
$\partial X_{K^n}$ is a non-simply connected homology $(n+2)$-sphere.
    \begin{proof}
    Clearly, $\partial X_{K^n}$ is a homology $(n+2)$-sphere because $X_{K^n}$ is a contractible manifold. Also, $\pi_1(\partial X_{K^n})\cong\pi_1(S^n\times S^2\setminus int(\nu (K^n)))$ is non-trivial by Proposition \ref{pro:FndmntlGpofCmplmnt} and \ref{pro:FndmntlGrSame}.
    \end{proof}
\end{corollary}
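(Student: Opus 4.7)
The plan is to separate the statement into its two constituent claims and pull most of the content directly from the preceding propositions. First I would establish that $\partial X_{K^n}$ has the integral homology of $S^{n+2}$, and then that its fundamental group is non-trivial.

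For the homology statement, the input I need is that $X_{K^n}$ itself is contractible, not just stably so. Proposition \ref{pro:XprdctIntvldiffeotoball} only gives $X_{K^n}\times B^1\cong B^{n+4}$, so a quick preliminary step is to observe that $X_{K^n}$ is a deformation retract of $X_{K^n}\times B^1$, and hence homotopy equivalent to $B^{n+4}$, so $\tilde{H}_\ast(X_{K^n};\mathbb{Z})=0$. I would then feed this into the long exact sequence of the pair $(X_{K^n},\partial X_{K^n})$ together with Lefschetz duality $H_k(X_{K^n},\partial X_{K^n};\mathbb{Z})\cong H^{n+3-k}(X_{K^n};\mathbb{Z})$; the duality group vanishes except in the top dimension, and the long exact sequence then forces $H_i(\partial X_{K^n};\mathbb{Z})=0$ for $0<i<n+2$ and $H_{n+2}(\partial X_{K^n};\mathbb{Z})\cong\mathbb{Z}$, which is the definition of a homology $(n+2)$-sphere.

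For the non-simply connectedness, I would simply chain together the two fundamental group computations already in place: Proposition \ref{pro:FndmntlGrSame} gives $\pi_1(\partial X_{K^n})\cong \pi_1(S^n\times S^2\setminus\operatorname{int}(\nu(K^n)))$, and Proposition \ref{pro:FndmntlGpofCmplmnt} identifies the latter with the alternating group $A_5$, which is certainly non-trivial. Combining the two parts yields the corollary.

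There is no real obstacle here; the only point that requires a line of justification is the passage from stable contractibility of $X_{K^n}$ to genuine contractibility, which is immediate from the product structure. Everything else is a bookkeeping application of Lefschetz duality and a citation to the earlier propositions.
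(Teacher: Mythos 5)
Your argument is correct and follows the same route as the paper: the fundamental group claim is exactly the paper's chaining of Propositions \ref{pro:FndmntlGrSame} and \ref{pro:FndmntlGpofCmplmnt}, and your Lefschetz-duality computation merely spells out the standard fact the paper invokes with ``clearly,'' namely that the boundary of a compact contractible manifold is a homology sphere (with contractibility of $X_{K^n}$ extracted, as you do, from $X_{K^n}\times B^1\cong B^{n+4}$ via Proposition \ref{pro:XprdctIntvldiffeotoball}). No gaps; the extra detail you supply is exactly the justification the paper leaves implicit.
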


\begin{corollary}\label{cor:ctrcblebutnothomeo}
$X_{K^n}$ is contractible but not homeomorphic to $B^{n+3}.$
    \begin{proof}
        $X_{K^n}$ is contractible by Proposition \ref{pro:XprdctIntvldiffeotoball} but not homeomorphic to $B^{n+3}$ by Corollary \ref{cor:nonsmplycnctdhlgysphre}.
    \end{proof}
\end{corollary}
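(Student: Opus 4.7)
The plan is to deduce both halves of the statement as essentially immediate consequences of the two main technical results already in hand, namely Proposition \ref{pro:XprdctIntvldiffeotoball} and Corollary \ref{cor:nonsmplycnctdhlgysphre}. The corollary is really just a bookkeeping step that records the conclusion of the preceding work, so no new geometry is needed.

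First, for contractibility, I would argue from Proposition \ref{pro:XprdctIntvldiffeotoball}. Since $X_{K^n}\times B^1 \cong B^{n+4}$ is contractible, and $X_{K^n}$ is a retract of $X_{K^n}\times B^1$ (via the projection onto the first factor, with section $x\mapsto (x,0)$), contractibility of the product passes immediately to $X_{K^n}$. Concretely, the identity on $X_{K^n}$ factors through the contractible space $X_{K^n}\times B^1$, so it is nullhomotopic, which is the definition of contractibility.

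Second, for the statement that $X_{K^n}$ is not homeomorphic to $B^{n+3}$, I would argue by contradiction using $\partial X_{K^n}$. A homeomorphism $X_{K^n}\cong B^{n+3}$ would restrict (by invariance of the boundary) to a homeomorphism $\partial X_{K^n}\cong S^{n+2}$. Since $n\geq 2$, the sphere $S^{n+2}$ is simply connected, so this would force $\pi_1(\partial X_{K^n})=1$. But Corollary \ref{cor:nonsmplycnctdhlgysphre} says precisely that $\partial X_{K^n}$ is non-simply connected (its fundamental group surjects onto $A_5$). This contradiction completes the proof.

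There is no real obstacle here — the work has all been done in the propositions the corollary cites, and the only conceptual points are the triviality of $\pi_1(S^{n+2})$ for $n\geq 2$ and the fact that a retract of a contractible space is contractible. Both steps are one-line invocations of previously established results.
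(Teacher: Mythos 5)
Your proposal is correct and follows the same route as the paper, which simply cites Proposition \ref{pro:XprdctIntvldiffeotoball} for contractibility and Corollary \ref{cor:nonsmplycnctdhlgysphre} for the non-homeomorphism; you have merely filled in the standard details (retract of a contractible space, invariance of the boundary, simple connectivity of $S^{n+2}$). No gaps.
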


\begin{corollary}\label{cor:mniIntrsctnNmbrs}
There exists no $n$-knot $F$ in $S^n\times S^2$ such that $F$ is isotopic to $K^n$ and $|F\cap (\{x_0\}\times S^2)|<3.$
    \begin{proof}
    Suppose that there is an $n$-knot $F$ in $S^n\times S^2$ such that $F$ is isotopic to $K^n$ and $|F\cap (\{x_0\}\times S^2)|=1.$ Since $K^n$ and $F$ are isotopic, $X_{K^n}$ is diffeomorphic to $X_F$ which is obtained from $S^n\times B^3$ by attaching a single $(n+1)$-handle along $F.$ Since $|F\cap (\{x_0\}\times S^2)|=1$, $X_{K^n}$ is diffeomorphic to $B^{n+3}\cong X_F,$ which is a contradiction to Corollary \ref{cor:ctrcblebutnothomeo}. Suppose that there is an $n$-knot $F$ in $S^n\times S^2$ such that $F$ is isotopic to $K^n$ and $|F\cap (\{x_0\}\times S^2)|=2.$ Then, the algebraic intersection number $F\cdot (\{x_0\}\times S^2)$ is $0$ or $\pm2$, so $K^n\cdot (\{x_0\}\times S^2)$ is $0$ or $\pm2.$ This is a contradiction to $K^n\cdot (\{x_0\}\times S^2)=1.$ Therefore there exists no $n$-knot $F$ in $S^n\times S^2$ such that $F$ is isotopic to $K^n$ and $|F\cap (\{x_0\}\times S^2)|<3.$
    \end{proof}
\end{corollary}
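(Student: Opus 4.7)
The plan is to argue by contradiction, considering each possible value of $|F \cap (\{x_0\} \times S^2)|$ strictly less than $3$, and to split the argument into a parity/homology part and a handle-cancellation part.

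First, I would observe that the algebraic intersection number is an isotopy invariant of oriented submanifolds, so if $F$ is isotopic to $K^n$ then $F \cdot (\{x_0\} \times S^2) = K^n \cdot (\{x_0\} \times S^2) = 1$ by Proposition \ref{pro:hmtpctoSnXpt}. Since the geometric intersection number always has the same parity as the algebraic one (being obtained by adding $\pm 1$ contributions), $|F \cap (\{x_0\} \times S^2)|$ must be odd. This immediately eliminates the cases $|F \cap (\{x_0\} \times S^2)| = 0$ and $|F \cap (\{x_0\} \times S^2)| = 2$: in the former the algebraic intersection would be $0 \neq 1$, and in the latter it would be $0$ or $\pm 2$, neither of which equals $1$.

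The remaining case is $|F \cap (\{x_0\} \times S^2)| = 1$. Here I would use the fact that if $F$ is isotopic to $K^n$, then the manifold $X_F := S^n \times B^3 \cup_{\phi_F} B^{n+1} \times B^2$ obtained by attaching an $(n+1)$-handle along $F$ is diffeomorphic to $X_{K^n}$ (the diffeomorphism type depends only on the isotopy class of the attaching sphere, since the framing is unique by $\pi_n(SO(2)) = 0$ for $n \geq 2$). Now $\{x_0\} \times S^2$ is precisely the belt sphere of the $n$-handle in the decomposition $S^n \times B^3 = B^{n+3} \cup (n$-handle$)$, and $F$ is the attaching sphere of the $(n+1)$-handle. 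A single transverse geometric intersection point means the $n$-handle and $(n+1)$-handle form a cancelling pair, so $X_F \cong B^{n+3}$. But $X_{K^n} \cong X_F$ is not homeomorphic to $B^{n+3}$ by Corollary \ref{cor:ctrcblebutnothomeo}, a contradiction.

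The argument has no real obstacles: the parity step is elementary, and the handle-cancellation step is the standard lemma that an $n$-handle and an $(n+1)$-handle cancel whenever the attaching sphere of the upper handle meets the belt sphere of the lower handle transversely in a single point. The only subtlety worth spelling out is that $\{x_0\} \times S^2$ really is the belt sphere of the $n$-handle in the handle decomposition of $S^n \times B^3$, which follows at once from writing $S^n \times B^3 = (B^n \times B^3) \cup_{S^{n-1} \times B^3} (B^n \times B^3)$.
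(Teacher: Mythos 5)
Your proof is correct and follows essentially the same route as the paper: the case $|F\cap(\{x_0\}\times S^2)|=1$ is ruled out by handle cancellation giving $X_F\cong B^{n+3}$ against Corollary \ref{cor:ctrcblebutnothomeo}, and the even cases are ruled out by the algebraic intersection number being $1$. Your parity formulation even covers the case $|F\cap(\{x_0\}\times S^2)|=0$ explicitly, which the paper's proof leaves implicit.
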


We now prove our main theorems.

\mainone*
\begin{proof}
    Let $X_{K^n}$ be the $(n+3)$-manifold in Definition \ref{dfn:theNknt}. $X_{K^n}$ admits a handle decomposition with a $0$-handle, an $n$-handle and an $(n+1)$-handle by Definition \ref{dfn:theNknt}. $X_{K^n}$ is contractible but not homeomorphic to $B^{n+3}$ by Corollary \ref{cor:ctrcblebutnothomeo}.
\end{proof}

\begin{lemma}\label{lem:doubleissphere}
The double $DX_{K^n}=X_{K^n}\cup_{id}\overline{X_{K^n}}$ of $X_{K^n}$ is diffeomorphic to $S^{n+3},$ where $id:\partial X_{K^n}\rightarrow \partial X_{K^n}$ is an identity map.
    \begin{proof}
    $DX_{K^n}=X_{K^n}\cup_{id}\overline{X_{K^n}}\cong\partial (X_{K^n}\times B^1)\cong \partial(B^{n+4})=S^{n+3}$ by Proposition \ref{pro:XprdctIntvldiffeotoball}.
    \end{proof}
\end{lemma}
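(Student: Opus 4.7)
The plan is to reduce the statement to Proposition \ref{pro:XprdctIntvldiffeotoball} via the standard identification between the double of a compact manifold with boundary and the boundary of its product with an interval. Specifically, for any compact smooth manifold $M$ with boundary, there is a canonical diffeomorphism
\begin{equation*}
DM = M\cup_{id}\overline{M} \;\cong\; \partial(M\times B^1),
\end{equation*}
obtained by noting that $\partial(M\times B^1)=(M\times\{-1\})\cup(\partial M\times B^1)\cup(M\times\{+1\})$, and collapsing the collar $\partial M\times B^1$ onto $\partial M$ (equivalently, smoothing the corners of $M\times B^1$ along $\partial M\times\{\pm 1\}$). Under this identification $M\times\{-1\}$ corresponds to $M$, $M\times\{+1\}$ corresponds to $\overline{M}$, and the gluing along $\partial M$ is visibly the identity. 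This is a standard fact, and I would just cite it rather than verify the smoothing carefully.

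Applying this to $M=X_{K^n}$ gives $DX_{K^n}\cong \partial(X_{K^n}\times B^1)$. Now Proposition \ref{pro:XprdctIntvldiffeotoball} provides the crucial input: $X_{K^n}\times B^1$ is diffeomorphic to $B^{n+4}$. Taking boundaries on both sides yields
\begin{equation*}
DX_{K^n} \;\cong\; \partial(X_{K^n}\times B^1) \;\cong\; \partial B^{n+4} \;=\; S^{n+3},
\end{equation*}
which is precisely the claim.

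There is essentially no obstacle here, since Proposition \ref{pro:XprdctIntvldiffeotoball} already performs all of the nontrivial geometric work (translating the isotopy from Proposition \ref{pro:istpctoSnXptXpt} into an $n$-handle/$(n+1)$-handle cancellation in one higher dimension). The only point worth checking is that the identity gluing map on $\partial X_{K^n}$ used to define $DX_{K^n}$ matches the gluing implicit in $\partial(X_{K^n}\times B^1)$, which is immediate from the product structure.
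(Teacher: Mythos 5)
Your proposal is correct and follows exactly the paper's argument: identify $DX_{K^n}$ with $\partial(X_{K^n}\times B^1)$ and then invoke Proposition \ref{pro:XprdctIntvldiffeotoball} to conclude $\partial(X_{K^n}\times B^1)\cong\partial B^{n+4}=S^{n+3}$. The extra detail you give about the corner-smoothing identification $DM\cong\partial(M\times B^1)$ is the standard fact the paper leaves implicit, so there is nothing to add.
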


\maintwo*
    \begin{proof}
    By Lemma \ref{lem:doubleissphere}, $S^{n+3}\cong X_{K^n}\cup_{id} \overline{X_{K^n}}.$ Define an involution $\phi: S^{n+3}\rightarrow S^{n+3}$ switching copies of $X_{K^n}$ and fixing $\partial X_{K^n}.$
    \end{proof}

\bibliographystyle{plain} 
\bibliography{refs} 

\end{document}